\documentclass[12pt, reqno]{amsart}
\usepackage{mathtools, amscd, amsfonts, amssymb}
\usepackage{bm}
\usepackage{graphicx}
\usepackage[shortlabels]{enumitem}
\usepackage{mathrsfs}
\usepackage{color}
\usepackage{cite}
\usepackage{lineno}
\usepackage[
colorlinks=true,
linkcolor=blue,
citecolor=blue,
urlcolor=blue,
pagebackref=true
]{hyperref}
\newtheorem{theorem}{Theorem}[section]
\newtheorem{lemma}[theorem]{Lemma}
\newtheorem{proposition}[theorem]{Proposition}
\newtheorem{corollary}[theorem]{Corollary}
\theoremstyle{definition}
\newtheorem{definition}[theorem]{Definition}
\newtheorem{example}[theorem]{Example}

\theoremstyle{remark}
\newtheorem{remark}[theorem]{Remark}
\numberwithin{equation}{section}

\numberwithin{equation}{section}

\DeclareMathOperator{\im}{range}

\usepackage[
colorlinks=true,
linkcolor=blue,
citecolor=blue,
urlcolor=blue,
pagebackref=true
]{hyperref}

\usepackage{etoolbox}
\makeatletter
\patchcmd{\@settitle}{\uppercasenonmath\@title}{}{}{}
\patchcmd{\@settitle}{\MakeUppercase}{\@firstofone}{}{}

\patchcmd{\@setauthors}{\MakeUppercase}{\@firstofone}{}{}
\patchcmd{\@setauthors}{\uppercasenonmath}{\@firstofone}{}{}

\@ifpackageloaded{hyperref}{%
	\patchcmd{\HyOrg@maketitle}{\uppercasenonmath\shorttitle}{}{}{}%
	\patchcmd{\HyOrg@maketitle}{\uppercasenonmath\shortauthors}{}{}{}%
	\patchcmd{\HyOrg@maketitle}{\@nx\MakeUppercase{\the\toks@}}{\the\toks@}{}{}%
	\patchcmd{\HyOrg@maketitle}{\@nx\MakeUppercase{\the\@temptokena}}{\the\@temptokena}{}{}%
}{%
	\patchcmd{\maketitle}{\uppercasenonmath\shorttitle}{}{}{}%
	\patchcmd{\maketitle}{\uppercasenonmath\shortauthors}{}{}{}%
	\patchcmd{\maketitle}{\@nx\MakeUppercase{\the\toks@}}{\the\toks@}{}{}%
	\patchcmd{\maketitle}{\@nx\MakeUppercase{\the\@temptokena}}{\the\@temptokena}{}{}%
}

\patchcmd{\section}{\scshape}{\bfseries}{}{}

\def\@secnumfont{\bfseries}

\patchcmd{\@setaddresses}{\scshape}{\upshape}{}{}
\makeatother
\begin{document}
	
	\title[Regularity of the Factorization of the Characteristic Function]{Regularity of the Sz.-Nagy and Foia\c{s} Factorization of Characteristic Functions and Its Multivariable Analogue}

	\author[Kalpesh J. Haria]{Kalpesh J. Haria}
	
	\address{School of Mathematics and Computer Science,
		Indian Institute of Technology Goa,
		Goa 403401, India}
	\email{kalpesh@iitgoa.ac.in, hikalpesh.haria@gmail.com}
	
	\author[Aashish Kumar Maurya]{Aashish Kumar Maurya}

	\address{School of Mathematics and Computer Science,
		Indian Institute of Technology Goa,
		Goa 403401, India}
	\email{aashish21232101@iitgoa.ac.in, a.k.maurya.math@gmail.com}

	% Subject classification and keywords - MODIFY THIS SECTION
	
	% Subject classification and keywords
	\subjclass[2020]{Primary 47A15, 47A45, 47A68; Secondary 47A56}
	\keywords{Invariant subspaces, characteristic functions, regular factorizations, block upper triangular operator matrices}

\begin{abstract}
	Given a contraction with an invariant subspace and a row contraction with a joint invariant subspace, a factorization of their characteristic functions was obtained by Sz.-Nagy and Foia\c{s}, and by Haria, Maji, and Sarkar, respectively. In this article, we investigate the regularity of this factorization in both the single-variable and multivariable cases. We construct examples of a contraction $T$ with an invariant subspace such that this factorization is not regular in general. If $T$ is completely non-unitary, we prove that this factorization is either regular or strange. Furthermore, we obtain a characterization of the regularity of this factorization. Using this characterization, we identify several classes of contractions and row contractions for which this factorization is regular. Among these classes, the most notable classes are pure contractions and pure row contractions. Additionally, for any integer $k>2$, we introduce the concept of $k$-strange factorizations for contractive analytic functions, which extend the concept of strange ($2$-strange) factorizations introduced by Sz.-Nagy and Foia\c{s}. Finally, we prove that if a contraction is pure or is a completely non-unitary contraction for which $\Delta_{\Theta_T}(t)$ has finite rank almost everywhere, then its characteristic function does not admit any $k$-strange factorization.
\end{abstract}

	\maketitle
	
	\tableofcontents

	% Ensure \usepackage{xcolor} is included in your document preamble to compile \textcolor.
	
	\section{Introduction}
	
	Let $\mathcal{H}$ be a separable Hilbert space over the complex field, and let $T\in B(\mathcal{H})$ be a contraction. Associated with every contraction $T$, Sz.-Nagy and Foia\c{s} introduced a purely contractive operator-valued analytic function, called the \emph{characteristic function} of $T$, and established that it serves as a complete unitary invariant for completely non-unitary contractions. A comprehensive exposition of this theory can be found in the monograph \cite{NFBK10}.
	
	Let $T\in B(\mathcal{H})$ be a bounded linear operator on a separable complex Hilbert space $\mathcal{H}$, and suppose that $\mathcal{H}_1\subseteq\mathcal{H}$ is a closed invariant subspace for $T$. Setting $\mathcal{H}_2 = \mathcal{H} \ominus \mathcal{H}_1$, the operator $T$ can be decomposed with respect to the orthogonal decomposition $\mathcal{H}_1\oplus\mathcal{H}_2$ into the block upper-triangular form
	\begin{align}\label{triangular_form}
		T=\begin{bmatrix}
			A & X\\
			0 & B
		\end{bmatrix}:\mathcal{H}_1\oplus\mathcal{H}_2\to \mathcal{H}_1\oplus\mathcal{H}_2.
	\end{align}
	In Theorem~1 of \cite{NF67}, Sz.-Nagy and Foia\c{s} established necessary and sufficient conditions for an operator $T$ of this form to be a contraction.
	
	For a contraction $T\in B(\mathcal{H})$ of the form \eqref{triangular_form}, Sz.-Nagy and Foia\c{s} proved in \cite{NF67} that the characteristic function $\Theta_T$ admits a factorization into a product of contractive analytic functions:
	\begin{align}\label{main_fact_1}
		\Theta_T(z) =
		\underbrace{
			\tau_*^{-1}
			\begin{bmatrix}
				\Theta_{B}(z) & 0 \\
				0 & I_{\mathcal{D}_{L^*}}
			\end{bmatrix}
		}_{\text{third factor}}
		\underbrace{
			\begin{bmatrix}
				L^* & D_L \\
				D_{L^*} & -L
			\end{bmatrix}
		}_{\text{second factor}}
		\underbrace{
			\begin{bmatrix}
				\Theta_{A}(z) & 0 \\
				0 & I_{\mathcal{D}_L}
			\end{bmatrix} \tau
		}_{\text{first factor}}
		\quad \text{for all } z \in \mathbb{D}.
	\end{align}
	Furthermore, after decomposing each factor into its constant unitary and purely contractive parts, the purely contractive parts of the first and third factors are the characteristic functions of the contractions $A$ and $B$, respectively, while the middle factor is the Julia--Halmos matrix corresponding to the coupling operator $L$.
	
	In the multivariable setting, Haria, Maji, and Sarkar \cite{HMS17} generalized this factorization theorem to row contractions possessing a joint invariant subspace, showing that the characteristic function $\Theta_T$ admits the decomposition
	\begin{align}\label{main_fact_2}
		\Theta_T = 
		\underbrace{
			(I_\Gamma \otimes \sigma_*^{-1})
			\begin{bmatrix}
				\Theta_B & 0 \\
				0 & I_{\Gamma \otimes \mathcal{D}_{L^*}}
			\end{bmatrix}
		}_{\text{third factor}}
		\underbrace{
			(I_\Gamma \otimes J_L)
		}_{\text{second factor}} 
		\underbrace{
			\begin{bmatrix}
				\Theta_A & 0 \\ 
				0 & I_{\Gamma \otimes \mathcal{D}_{L}}
			\end{bmatrix}
			(I_\Gamma \otimes \sigma)
		}_{\text{first factor}},
	\end{align}
	where the purely contractive parts of the first and third factors coincide with the characteristic functions of the row contractions $A$ and $B$, respectively.
	
	As established by Sz.-Nagy and Foia\c{s} \cite{NF64,N70} in the single-operator setting and by Popescu \cite{Po06} in the multivariable setting, whenever a completely non-unitary contraction admits a closed invariant subspace, or a completely non-coisometric row contraction admits a joint invariant subspace, its characteristic function admits a regular factorization. In this regular factorization, the purely contractive parts of the first and second factors are coincide with the characteristic functions of the diagonal blocks $A$ and $B$, respectively.
	
	Motivated by these observations, in \cite{HMS17}, Haria, Maji, and Sarkar raised the question  of how the factorizations \eqref{main_fact_1} and \eqref{main_fact_2} are related to the regular factorizations of the characteristic functions of a contraction and a row contraction, respectively.

	The main goal of the present paper is to determine whether the factorizations
	\eqref{main_fact_1} and \eqref{main_fact_2} of the characteristic function are
	$3$-regular. This problem is motivated by the notion of $k$-regular
	factorization, introduced in \cite{HM26a} for products of contractions with
	$k>2$, as a natural extension of the regular factorization of Sz.-Nagy and
	Foia\c{s}.

	The theory of regular factorizations has been extensively employed in the study of invariant and hyperinvariant subspaces. Notable contributions in this direction include the works of P.~Y.~Wu \cite{Wu78a,Wu78b,Wu79a,Wu79b} and the series of papers by R.~I.~Teodorescu \cite{Te75,Te76,Te77,Te78a,Te78b,Te79,Te80}. More recently, D.~Timotin \cite{T20} applied the notion of regular factorization to characterize all invariant subspaces of the operator $S \oplus S^*$, where $S$ denotes the unilateral shift on the Hardy space $H^2(\mathbb{D})$.
	
	Section~2 is devoted to preliminary results and definitions required in subsequent sections. In Section~3, we introduce the concept of $k$-strange factorizations for contractive analytic functions, which generalizes the classical notion of strange factorizations given in Chapter~VII of \cite{NFBK10}. We first prove that if a contraction $T$ is pure, or if $T$ is completely non-unitary (c.n.u.) and the defect operator $\Delta_{\Theta_{T}}(t)$ has finite rank for almost every $t\in [0,2\pi]$, then its characteristic function $\Theta_T$ admits no $k$-strange factorization. Next, for a c.n.u.\ contraction $T\in B(\mathcal{H})$ in the block upper-triangular form \eqref{triangular_form}, we show that the factorization \eqref{main_fact_1} is either $3$-regular or $3$-strange; consequently, if the rank of $\Delta_{\Theta_{T}}(t)$ is finite almost everywhere, then \eqref{main_fact_1} is necessarily $3$-regular.
	
	Furthermore, we obtain a characterization of the $3$-regularity of \eqref{main_fact_1} and apply it to identify several classes of contractions for which this factorization is $3$-regular, including a distinguished class of pure contractions. Whether $3$-regularity holds for arbitrary c.n.u.\ contractions currently remains an open question.
	
	In Section~4, we present counterexamples of contractions in block upper-triangular form for which the factorization \eqref{main_fact_1} fails to be $3$-regular. Notably, none of the constructed counterexamples is a c.n.u.\ contraction; consequently, the problem remains open for arbitrary c.n.u.\ contractions.
	
	Section~5 is devoted to the study of factorizations of characteristic functions associated with row contractions. The principal aim of this section is to examine the regularity of the factorization \eqref{main_fact_2} introduced in \cite{HMS17}. To this end, we first recall the necessary definitions regarding characteristic functions and regular factorizations in the multivariable setting. As demonstrated in Section~4, the factorization \eqref{main_fact_2} is not $3$-regular in general. Nevertheless, we identify several classes of row contractions for which this factorization is $3$-regular. In particular, regularity is established for a distinguished class of pure row contractions; whether $3$-regularity holds for arbitrary completely non-coisometric row contractions remains an open question.

	\section{Preliminaries}\label{sec prelim}

	In this section, we recall the fundamental definitions and preliminary results that will be utilized throughout this paper. All Hilbert spaces considered herein are assumed to be separable and defined over the complex field. For Hilbert spaces \(\mathcal{H}\) and \(\mathcal{K}\), \(B(\mathcal{H}, \mathcal{K})\) denotes the space of all bounded linear operators from \(\mathcal{H}\) to \(\mathcal{K}\).

	\begin{definition}
		Let \(T\in B(\mathcal H,\mathcal K)\). The \emph{operator norm} of \(T\) is defined by
		\[
		\|T\|\coloneq\sup\{\|Tx\|:\,x\in\mathcal H,\ \|x\|\le1\}.
		\]
		The operator \(T\) is said to be:
		\begin{enumerate}[\rm(i)]
			\item a \emph{contraction} if \(\|Tx\|\le\|x\|\) for all \(x\in\mathcal H\);
			
			\item a \emph{proper contraction} if \(\|Tx\|<\|x\|\) for all nonzero \(x\in\mathcal H\);
			
			\item a \emph{strict contraction} if \(\|T\|<1\);
			
			\item an \emph{isometry} if \(\|Tx\|=\|x\|\) for all \(x\in\mathcal H\); 
			
			\item \emph{unitary} if it is a surjective isometry;
			
			\item \emph{pure} if \(\mathcal{H}=\mathcal{K}\) and \(\lim\limits_{n\to\infty}\|T^{*n}x\|=0\) for all \(x\in\mathcal H\).
		\end{enumerate}
	\end{definition}

	\noindent Given a contraction \(T \in B(\mathcal{H}, \mathcal{K})\), its associated \emph{defect operators} are defined by  
	\[
	D_T := (I - T^*T)^{1/2}, \quad D_{T^*} := (I - TT^*)^{1/2}.
	\]  
	The corresponding \emph{defect spaces} are defined as the closures of the ranges of these operators, namely  
	\[
	\mathcal{D}_T := \overline{range(D_T)}, \quad \mathcal{D}_{T^*} := \overline{range(D_{T^*})},
	\]  
	and their respective dimensions, denoted by \(\partial_T := \dim \mathcal{D}_T\) and \(\partial_{T^*} := \dim \mathcal{D}_{T^*}\), are referred to as the \emph{defect indices} of \(T\).
	
	Suppose $\mathcal{H}_1$ and $\mathcal{H}_2$ are Hilbert spaces, and let $T \in B(\mathcal{H}_1 \oplus \mathcal{H}_2)$ be a bounded linear operator for which $\mathcal{H}_1$ is an invariant subspace. The following theorem establishes a necessary and sufficient condition for the corresponding upper triangular block matrix representation of \(T\) to be a contraction.
	\begin{theorem}[\rm Theorem 1, \cite{NF67}]\label{lemma:1}
		Let \( \mathcal{H}_1 \) and \( \mathcal{H}_2 \) be Hilbert spaces.  
		Let \( T = \begin{bmatrix} A & X \\ 0 & B \end{bmatrix} \) be a contraction on the Hilbert space \( \mathcal{H}_1 \oplus \mathcal{H}_2 \). Then there exists a contraction \( L\in B(\mathcal{D}_B ,\mathcal{D}_{A^*}) \), called the \emph{coupling operator}, such that  
		\[
		X = D_{A^*} L D_B.
		\]  
		Conversely, if \( A \in B(\mathcal{H}_1) \), \( B \in B(\mathcal{H}_2) \), and \( L\in B(\mathcal{D}_B ,\mathcal{D}_{A^*}) \) are contractions, then  
		\[
		T = \begin{bmatrix} A & D_{A^*} L D_B \\ 0 & B \end{bmatrix}
		\]  
		is a contraction on the Hilbert space \( \mathcal{H}_1 \oplus \mathcal{H}_2 \).
	\end{theorem}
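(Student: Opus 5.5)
The plan is to reduce both directions to the standard positivity criterion for $2\times 2$ operator matrices. That criterion says that for positive operators $P\in B(\mathcal{H}_1)$, $Q\in B(\mathcal{H}_2)$ and $Y\in B(\mathcal{H}_2,\mathcal{H}_1)$, we have
\[
\begin{bmatrix} P & Y\\ Y^* & Q\end{bmatrix}\ge 0
\]
if and only if $Y=P^{1/2}KQ^{1/2}$ for some contraction $K\in B(\overline{\operatorname{range}Q^{1/2}},\overline{\operatorname{range}P^{1/2}})$. The criterion itself is proved by testing the quadratic form on vectors $(x,-y)$. This gives $|\langle Yy,x\rangle|^2\le\langle Px,x\rangle\langle Qy,y\rangle$, and hence $|\langle Yy,x\rangle|\le\|P^{1/2}x\|\,\|Q^{1/2}y\|$. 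The sesquilinear form $(Q^{1/2}y,P^{1/2}x)\mapsto\langle Yy,x\rangle$ is therefore well defined and bounded by $1$ on $\operatorname{range}Q^{1/2}\times\operatorname{range}P^{1/2}$. It extends to the closures, and Riesz representation produces the contraction $K$. The converse is the direct computation $\langle Px,x\rangle+2\operatorname{Re}\langle P^{1/2}KQ^{1/2}y,x\rangle+\langle Qy,y\rangle\ge(\|P^{1/2}x\|-\|Q^{1/2}y\|)^2\ge0$.

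We then apply the criterion to $T^*$ rather than $T$, because $I-TT^*$ has a convenient form. With $T=\begin{bmatrix}A&X\\0&B\end{bmatrix}$ one computes
\[
I-TT^*=\begin{bmatrix} I-AA^*-XX^* & -XB^*\\ -BX^* & I-BB^*\end{bmatrix}.
\]
This is not directly in the form required by the criterion, so we instead use the factorization trick below.

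$T$ is a contraction exactly when $\begin{bmatrix} I & T\\ T^* & I\end{bmatrix}\ge0$. A cleaner route uses a Schur-type reduction in two steps.

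\textbf{Step 1 (the row $[A\ X]$).} $[A\ X]$ is a contraction iff $XX^*\le I-AA^*=D_{A^*}^2$. By Douglas' lemma this holds iff $X=D_{A^*}C$ for a contraction $C\colon\mathcal{H}_2\to\mathcal{D}_{A^*}$. One can take $C$ uniquely determined with range in $\mathcal{D}_{A^*}$.

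\textbf{Step 2 (using $B$).} Write $\|T(h_1\oplus h_2)\|^2=\|Ah_1+D_{A^*}Ch_2\|^2+\|Bh_2\|^2$. Using the Julia-type identity $D_{A^*}A=AD_A$, contractivity of $T$ becomes positivity of
\[
\begin{bmatrix} D_A^2 & -A^*D_{A^*}C\\ -C^*D_{A^*}A & D_B^2-C^*C\end{bmatrix}.
\]
Here one first checks that contractivity of $T$ forces $C^*C\le D_B^2$; this follows by taking $h_1=0$ and using $\|D_{A^*}Ch_2\|^2+\|Ch_2\|^2\cdots$. To avoid this route's messy cross terms, the intended approach is instead to read off only the $(2,2)$ corner from $h_1=0$ and an optimization over $h_1$.

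\textbf{Optimizing over $h_1$.} For fixed $h_2$, choose $h_1$ with $A^*$-components so that the cross term is balanced. The infimum over $h_1$ of $\|h_1\|^2-\|Ah_1+D_{A^*}Ch_2\|^2$ equals $-\|Ch_2\|^2$, since $\begin{bmatrix}A & D_{A^*}\end{bmatrix}$ is the first row of the Julia unitary. Contractivity of $T$ is therefore exactly $\|Ch_2\|^2+\|Bh_2\|^2\le\|h_2\|^2$, i.e.\ $C^*C\le D_B^2$. A second application of Douglas' lemma gives $C=LD_B$ with $L\colon\mathcal{D}_B\to\mathcal{D}_{A^*}$ a contraction, so $X=D_{A^*}LD_B$.

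\textbf{Converse.} Given contractions $A,B,L$, define $C=LD_B$. Then $C^*C\le D_B^2$, and the same identity computes $\|h\|^2-\|Th\|^2$ as a sum of nonnegative terms. Hence $T$ is a contraction.

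\textbf{Main obstacle.} The hardest step is the optimization over $h_1$, i.e.\ proving $\sup_{h_1}(\|Ah_1+D_{A^*}c\|^2-\|h_1\|^2)=\|c\|^2$ for $c\in\mathcal{D}_{A^*}$. The first thing to try is the unitarity of the Julia operator $\begin{bmatrix}A&D_{A^*}\\ D_A&-A^*\end{bmatrix}$ on $\mathcal{H}_1\oplus\mathcal{D}_{A^*}$. It gives $\|Ah_1+D_{A^*}c\|^2+\|D_Ah_1-A^*c\|^2=\|h_1\|^2+\|c\|^2$. The claimed supremum then reduces to showing $\inf_{h_1}\|D_Ah_1-A^*c\|=0$, i.e.\ that $A^*c$ lies in the closure of $\operatorname{range}D_A$. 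This holds because $A^*\mathcal{D}_{A^*}\subseteq\mathcal{D}_A$.
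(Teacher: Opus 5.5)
The paper gives no proof of this statement; it quotes the result from \cite{NF67}, so there is no in-paper argument to compare your route with. Judged on its own, the argument you finally settle on is correct and complete.

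\begin{enumerate}[(1)]
\item Douglas' lemma applied to $AA^*+XX^*\le I$ gives $X=D_{A^*}C$ with $\operatorname{range}C\subseteq\mathcal{D}_{A^*}$.
\item The Julia identity $\|Ah_1+D_{A^*}c\|^2+\|D_Ah_1-A^*c\|^2=\|h_1\|^2+\|c\|^2$, together with $A^*\mathcal{D}_{A^*}\subseteq\mathcal{D}_A$ (from $A^*D_{A^*}=D_AA^*$), shows that for $c\in\mathcal{D}_{A^*}$ the supremum of $\|Ah_1+D_{A^*}c\|^2-\|h_1\|^2$ over $h_1$ equals $\|c\|^2$. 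Hence contractivity of $T$ is equivalent to $C^*C\le D_B^2$.
\item A second factorization gives $C=LD_B$.
\end{enumerate}

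You were right to insist on $\operatorname{range}C\subseteq\mathcal{D}_{A^*}$: for a general $c\in\mathcal{H}_1$ that supremum is only $\|P_{\mathcal{D}_{A^*}}c\|^2$. Your converse also yields, at no extra cost, the exact identity
\[
\|h\|^2-\|Th\|^2=\|D_Ah_1-A^*LD_Bh_2\|^2+\|D_LD_Bh_2\|^2,\qquad h=h_1\oplus h_2 .
\]
Consequently $D_Th\mapsto(D_Ah_1-A^*LD_Bh_2)\oplus D_LD_Bh_2$ extends to a unitary $\mathcal{D}_T\to\mathcal{D}_A\oplus\mathcal{D}_L$. Density of the range again uses $A^*LD_B\mathcal{H}_2\subseteq\mathcal{D}_A$. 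This is the kind of identification $\tau$ that underlies Theorem~\ref{trm:1}, so your route fits well with how the result is used later in the paper.

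Before writing this up, delete the false starts, because two of them contain incorrect claims.

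\textbf{The Step~2 matrix.} The $2\times2$ matrix displayed in Step~2 does not represent $\|h\|^2-\|Th\|^2$. Its $(2,2)$ entry should be $D_B^2-C^*D_{A^*}^2C=D_B^2-C^*C+C^*AA^*C$. With $D_B^2-C^*C$ the claimed equivalence fails. Take scalars $A=a$ with $0<|a|<1$, $B=b$, and $C=c\neq0$ with $|c|^2=1-|b|^2$. Then $T$ is a contraction, but that matrix has determinant $-|a|^2(1-|a|^2)|c|^2<0$.

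\textbf{Setting $h_1=0$.} Putting $h_1=0$ only gives $C^*D_{A^*}^2C\le D_B^2$, which is strictly weaker than $C^*C\le D_B^2$. For example, $b=0$ and $|c|^2=1/(1-|a|^2)$ satisfies the former, yet $T$ is not a contraction. The optimization over $h_1$ is genuinely needed, and it is exactly what your final step supplies.

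\textbf{Unused material.} The positivity criterion in your first paragraph and the formula for $I-TT^*$ are never used and can be dropped.
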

	A contraction \(T\) on \(\mathcal{H}\) is called \emph{completely nonunitary }(c.n.u.)\ if there is no reducing subspace on which \(T\) acts unitarily. If $T = \begin{bmatrix} A & D_{A^*} L D_B \\ 0 & B \end{bmatrix}\in B(\mathcal{H}_1\oplus\mathcal{H}_2)$ is a c.n.u.\ contraction, then the operators $A\in B(\mathcal{H}_1)$ and $B\in B(\mathcal{H}_2)$ are necessarily c.n.u.\ contractions. However, the converse does not hold in general; that is, there exist c.n.u.\ contractions $A\in B(\mathcal{H}_1)$ and $B\in B(\mathcal{H}_2)$ such that $T = \begin{bmatrix} A & D_{A^*} L D_B \\ 0 & B \end{bmatrix}\in B(\mathcal{H}_1\oplus\mathcal{H}_2)$ is not a c.n.u.\ contraction, as demonstrated in \cite{NF67}. Nevertheless, a simple condition on the coupling operator $L$ guarantees the converse, as stated in the following lemma.
	
	\begin{lemma}[\rm p.~204, \cite{NF67}]\label{lemma:2}
		Let \( \mathcal{H}_1 \) and \( \mathcal{H}_2 \) be Hilbert spaces, and suppose that \( A\in B(\mathcal{H}_1) \) and \( B\in B(\mathcal{H}_2) \) are c.n.u.\ contractions. If there exists a contraction \( L\in B(\mathcal{D}_B ,\mathcal{D}_{A^*}) \) such that 
		\begin{align}\label{condition_L}
			\|Lg\| < \|g\| ~ \text{for all } g\in {D}_B \mathcal{H}_2\setminus\{0\}, 
			~\text{and} ~
			\|L^*f\| < \|f\| ~ \text{for all } f\in {D}_{A^*} \mathcal{H}_1\setminus\{0\},
		\end{align}
		then the operator
		\[
		T= \begin{bmatrix} A & D_{A^*} L D_B \\[4pt] 0 & B \end{bmatrix}
		\]
		is a c.n.u.\ contraction on \( \mathcal{H}_1 \oplus \mathcal{H}_2 \).
	\end{lemma}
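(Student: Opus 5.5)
We argue by contradiction: assume $T$ is not c.n.u. Then there is a nonzero reducing subspace $\mathcal{U}\subseteq\mathcal{H}_1\oplus\mathcal{H}_2$ on which $T$ is unitary. For every $h\in\mathcal{U}$ and every $n\ge 0$ we then have $\|T^nh\|=\|h\|=\|T^{*n}h\|$. In particular $D_T h=0$ and $D_{T^*}h=0$, and both $T^n h$ and $T^{*n}h$ again lie in $\mathcal{U}$. The plan is to show that such an $h$ is forced to lie in a subspace on which $A$ or $B$ acts unitarily. This contradicts the assumption that $A$ and $B$ are c.n.u.

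\textbf{Step 1 (the defect identity).} Write $h=h_1\oplus h_2$ and set $X=D_{A^*}LD_B$. Then
\[
\|Th\|^2=\|Ah_1+D_{A^*}LD_Bh_2\|^2+\|Bh_2\|^2 .
\]
We use the standard Julia-type decomposition behind Theorem~\ref{lemma:1}: the column $\begin{bmatrix} A & D_{A^*}L\end{bmatrix}$ acting on $\mathcal{H}_1\oplus\mathcal{D}_B$ is a contraction whose defect can be computed explicitly. This gives an identity of the form
\[
\|h\|^2-\|Th\|^2=\|D_Bh_2\|^2-\|LD_Bh_2\|^2+\bigl\|D_{L^*}^{\,\prime}\text{-term}\bigr\|^2 ,
\]
where the last term is nonnegative and involves $D_Ah_1$ together with $A^*$ and $L$ applied to $D_Bh_2$. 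Concretely,
\[
\|h\|^2-\|Th\|^2=\bigl\|D_Ah_1-A^*L D_Bh_2\bigr\|^2_{\text{suitably}}+\|D_L D_Bh_2\|^2 .
\]
This is the routine computation I would carry out carefully. From $\|Th\|=\|h\|$ we conclude $\|LD_Bh_2\|=\|D_Bh_2\|$. The hypothesis \eqref{condition_L} on $L$ then gives $D_Bh_2=0$. The dual computation with $T^*$ gives $D_{A^*}h_1=0$ in the same way, using the strict inequality for $L^*$ on $D_{A^*}\mathcal{H}_1$. This matters because $\mathcal{D}_{A^*}$ is the closure of $D_{A^*}\mathcal{H}_1$, and the vectors that actually arise lie in the range.

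\textbf{Step 2 (decoupling).} Since $D_Bh_2=0$ and $D_{A^*}h_1=0$, we get $Xh_2=0$ and $X^*h_1=0$. Hence
\[
Th=Ah_1\oplus Bh_2,\qquad T^*h=A^*h_1\oplus B^*h_2 .
\]
The vector $Th$ again lies in $\mathcal{U}$, so applying the same argument to $Th$ and $T^*h$ and iterating gives $D_BB^nh_2=0$, $D_{A^*}A^{*n}h_1=0$, and similarly for the mixed words. Thus $T^n h=A^nh_1\oplus B^nh_2$ and $T^{*n}h=A^{*n}h_1\oplus B^{*n}h_2$ for all $n$.

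Isometry of $T^n$ and $T^{*n}$ on $\mathcal{U}$ gives, for all $n$,
\[
\|A^nh_1\|^2+\|B^nh_2\|^2=\|h_1\|^2+\|h_2\|^2=\|A^{*n}h_1\|^2+\|B^{*n}h_2\|^2 .
\]
Each summand is bounded by the corresponding norm, so we get equality componentwise: $\|A^nh_1\|=\|h_1\|=\|A^{*n}h_1\|$, and likewise for $h_2$ with $B$. By the Sz.-Nagy--Foia\c{s} characterization, the set of vectors $x$ with $\|A^nx\|=\|x\|=\|A^{*n}x\|$ for all $n$ is exactly the unitary part of $A$. Since $A$ is c.n.u., this forces $h_1=0$, and likewise $h_2=0$. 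Hence $\mathcal{U}=\{0\}$, which is a contradiction.

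\textbf{Main obstacle.} The delicate point is Step 1: writing the defect $\|h\|^2-\|Th\|^2$ cleanly enough that the term $\|D_Bh_2\|^2-\|LD_Bh_2\|^2$ appears as a separate nonnegative summand. I would try first to factor $T$ as
\[
T=\begin{bmatrix} I & 0\\ 0 & B\end{bmatrix}\text{-type products}
\]
through the Julia--Halmos matrix of $L$. A product of contractions is isometric on a vector only if each factor is isometric on the corresponding intermediate vectors. Applied to this factorization, that yields $\|LD_Bh_2\|=\|D_Bh_2\|$ directly.
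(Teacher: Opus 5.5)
Your proof is correct. The paper gives no proof of this lemma; it only quotes it from \cite{NF67}, so there is no in-text argument to compare against.

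The one step you left unverified, the Step~1 identity
\[
\|h\|^2-\|Th\|^2=\|D_Ah_1-A^*LD_Bh_2\|^2+\|D_LD_Bh_2\|^2 ,
\]
is true, and it does not need the factorization through the Julia--Halmos matrix sketched in your ``Main obstacle'' paragraph. Take $g\in\mathcal{D}_{A^*}$. Using $AA^*+D_{A^*}^2=I$ and $A^*D_{A^*}=D_AA^*$, one gets $\|h_1\|^2+\|g\|^2-\|Ah_1+D_{A^*}g\|^2=\|D_Ah_1-A^*g\|^2$. Now put $g=LD_Bh_2$ and add $\|h_2\|^2-\|Bh_2\|^2=\|D_Bh_2\|^2$. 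Finally use $\|D_Bh_2\|^2-\|LD_Bh_2\|^2=\|D_LD_Bh_2\|^2$. This is the identity that makes the canonical map $\tau$ of Theorem~\ref{trm:1} isometric.

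The dual computation, with $B^*B+D_B^2=I$ and $BD_B=D_{B^*}B$, gives
$\|h\|^2-\|T^*h\|^2=\|D_{B^*}h_2-BL^*D_{A^*}h_1\|^2+\|D_{L^*}D_{A^*}h_1\|^2$. This is the identity behind $\tau_*$, and it yields $D_{A^*}h_1=0$ from the strict inequality for $L^*$ on $D_{A^*}\mathcal{H}_1$.

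The rest of your argument is sound as written:
\begin{enumerate}[(a)]
\item You correctly observe that the vectors involved lie in the ranges $D_B\mathcal{H}_2$ and $D_{A^*}\mathcal{H}_1$, not just in their closures.
\item The induction $T^nh=A^nh_1\oplus B^nh_2$, $T^{*n}h=A^{*n}h_1\oplus B^{*n}h_2$ holds because $\mathcal{U}$ reduces $T$. Only pure powers are needed, not mixed words.
\item The componentwise equality of norms follows.
\item The characterization of the unitary part as $\{x:\|A^nx\|=\|x\|=\|A^{*n}x\|\ \text{for all } n\}$ then finishes the proof.
\end{enumerate}

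In a final write-up, replace the tentative ``$D_{L^*}'$-term'' display and the ``Main obstacle'' paragraph with this short computation.
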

	
	\begin{remark}
		The condition \eqref{condition_L} imposed on the coupling operator \(L\) in Lemma~\ref{lemma:2} is sufficient, but not necessary, for \(T\) to be a c.n.u.\ contraction. For instance, if \( L \) is an isometry, then the operator
		\[
		T=\begin{bmatrix} 0 & L \\[4pt] 0 & 0 \end{bmatrix}
		\]
		is nilpotent and therefore pure, which implies that \(T\) is a c.n.u.\ contraction. However, because $L$ is isometric, it fails to satisfy condition \eqref{condition_L}.
	\end{remark}

	We recall the following basic definitions and notations from Chapters V and VI of \cite{NFBK10}. Let \(\mathcal{E}\) and \(\mathcal{E}_*\) be Hilbert spaces, and let \(\Theta:\mathbb{D}\to B(\mathcal{E},\mathcal{E}_*)\) be an operator-valued function. The triple \(\{\mathcal{E},\mathcal{E}_*,\Theta(z)\}\) is said to be a \emph{bounded analytic function} if the following conditions are satisfied:
	
   \noindent (i) The power series expansion
		\[
		\Theta(z) = \sum_{k=0}^{\infty} z^k \Theta_k, \quad z \in \mathbb{D}, \ \Theta_k \in B(\mathcal{E}, \mathcal{E}_*)
		\]
		converges on the open unit disc \(\mathbb{D}\) (note that in this context, convergence in the weak, strong, and norm topologies is equivalent).
		
		\noindent(ii) There exists a constant \(M\), independent of \(z\), such that \(\|\Theta(z)\|\leq M\) for all \(z \in \mathbb{D}\). 
		
		A bounded analytic function \(\{\mathcal{E}, \mathcal{E}_*, \Theta(z)\}\) is termed a \emph{contractive analytic function} if  
	\(\|\Theta(z)\|\leq 1 ~ \text{for all } z \in \mathbb{D}.\)
	Furthermore, the function \(\{\mathcal{E}, \mathcal{E}_*, \Theta(z)\}\) is said to be \emph{purely contractive} if \(\|\Theta(0)e\| < \|e\| \quad \text{for all nonzero } e \in \mathcal{E}.\)
	
	\begin{definition}[Chapter~VI, \cite{NFBK10}]
		Let $T$ be a contraction on a Hilbert space $\mathcal{H}$. The \emph{characteristic function} of $T$ is the operator-valued analytic function $\Theta_T:\mathbb{D}\to B(\mathcal{D}_T, \mathcal{D}_{T^*})$ defined by
		\[
		\Theta_T(z) := (-T+zD_{T^*}(I-zT^*)^{-1}D_T)|_{\mathcal{D}_T}, \quad \text{for all } z\in\mathbb{D}.
		\]
	\end{definition}
	For any contraction $ T\in B(\mathcal H),$ the restriction operator $ T|_{\mathcal{D}_T}: \mathcal{D}_T\to\mathcal{D}_{T^*} $ is a proper contraction. Which implies that for an arbitrary contraction $ T $, the characteristic function $\{\mathcal{D}_T,\mathcal{D}_{T^*},\Theta_{T}(z)\}$ is a purely contractive analytic function. 
	
	Let \(\{\mathcal{E}, \mathcal{E_*}, \Theta(z)\}\) be a bounded analytic function. We can associate the operator \(\Theta: L^2(\mathcal{E}) \to L^2(\mathcal{E_*})\) defined by 
	\begin{align}\label{induce_contraction}
		(\Theta v)(t) := \Theta(e^{it}) v(t) \quad \text{for all } v \in L^2(\mathcal{E}),
	\end{align}
	where \(t \in [0, 2\pi]\) and \(\Theta(e^{it})=\displaystyle\lim_{r\to 0^-}\Theta(re^{it})\) exists a.e.\ as a strong operator limit. We also define the operators 
	\[
	\Delta_\Theta(t) := (I_{\mathcal{E}} - \Theta^*(e^{it})\Theta(e^{it}))^{\frac{1}{2}} \text{   and } 	\Delta_{*\Theta}(t) := (I_{\mathcal{E}} - \Theta(e^{it})\Theta(e^{it})^*)^{\frac{1}{2}} .
	\]
	Additionally, we associate another operator  $\Theta_+:H^2(\mathcal{E})\to H^2(\mathcal{E_*})$ defined by 
	\[(\Theta_+u)(z) : = \Theta(z)u(z) ~\text{ for all }~ u\in H^2(\mathcal{E}), ~\text{for all } z \in \mathbb{D}.\]
	A contractive analytic function \(\{\mathcal{E}, \mathcal{E}_*, \Theta(z)\}\) is said to be \emph{inner} if the associated operator \(\Theta_+\) is an isometry. Moreover, a c.n.u.\ contraction \(T\) is pure if and only if its characteristic function is inner.
	\begin{proposition} [Proposition 2.1, Chapter V, \cite{NFBK10}]
		Let \(\{\mathcal{E}, \mathcal{E}_*, \Theta(z)\}\) be a contractive analytic function.  Then there exist uniquely determined orthogonal decompositions $\mathcal{E}=\mathcal{E}^p\oplus\mathcal{E}^u$ and $\mathcal{E}_*=\mathcal{E}_*^p\oplus\mathcal{E}_*^u$ such that
		\begin{align*}
			\Theta(z)=\begin{bmatrix}
				\Theta^p(z) & 0\\
				0 & \Theta^u(z)
			\end{bmatrix} :\mathcal{E}^p\oplus\mathcal{E}^u \to  \mathcal{E}_*^p\oplus\mathcal{E}_*^u,\quad \text{for all } z \in \mathbb{D},
		\end{align*}
		where $\Theta^p(z):=\Theta(z)|_{\mathcal{E}^p}$ is a purely contractive analytic function and $\Theta^u(z):=\Theta(z)|_{\mathcal{E}^u}$ is a unitary constant. The operator $ \{\mathcal{E}^p,\mathcal{E}_*^p,\Theta^p(z)\}$ is called the purely contractive part of  \(\{\mathcal{E}, \mathcal{E}_*, \Theta(z)\}\).
	\end{proposition}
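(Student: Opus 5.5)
We argue from the power series and the definition of pure contractivity. Let $\{\mathcal{E},\mathcal{E}_*,\Theta(z)\}$ be contractive with $\Theta(z)=\sum_k z^k\Theta_k$. The candidate for the unitary part is
\[
\mathcal{E}^u:=\{e\in\mathcal{E}:\ \|\Theta(0)e\|=\|e\|\},\qquad \mathcal{E}_*^u:=\Theta(0)\mathcal{E}^u,
\]
with $\mathcal{E}^p=\mathcal{E}\ominus\mathcal{E}^u$ and $\mathcal{E}_*^p=\mathcal{E}_*\ominus\mathcal{E}_*^u$. Since $\Theta(0)$ is a contraction, $\|\Theta(0)e\|=\|e\|$ holds exactly when $(I-\Theta(0)^*\Theta(0))e=0$. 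Hence $\mathcal{E}^u=\ker(I-\Theta(0)^*\Theta(0))$ is a closed subspace on which $\Theta(0)$ is isometric, and $\mathcal{E}_*^u$ is closed.

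The first step is to show that $\Theta(z)e=\Theta(0)e$ for every $z\in\mathbb{D}$ when $e\in\mathcal{E}^u$. Fix a unit vector $e\in\mathcal{E}^u$. For $u\in H^2(\mathcal{E})$ we have $\|\Theta_+ u\|\le\|u\|$, because $\|\Theta(e^{it})\|\le 1$ a.e. Applying this to the constant function $u=e$ gives
\[
\sum_k\|\Theta_k e\|^2=\|\Theta e\|_{H^2}^2\le\|e\|^2=\|\Theta_0e\|^2,
\]
so $\Theta_k e=0$ for every $k\ge1$.

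The second step is to obtain the corresponding statement for the adjoint function. The function $\tilde\Theta(z):=\Theta(\bar z)^*$ is also contractive analytic, and $\tilde\Theta(0)=\Theta_0^*$. For $e\in\mathcal{E}^u$ we have $\Theta_0^*\Theta_0e=e$, so $f=\Theta_0 e$ satisfies $\|\Theta_0^* f\|=\|f\|$. The same argument applied to $\tilde\Theta$ then gives $\Theta_k^*f=0$ for $k\ge1$. Consequently $\Theta(z)^*f=\Theta_0^*f=e$ for every $z$.

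The third step derives the block-diagonal form. From the first step, $\Theta(z)$ maps $\mathcal{E}^u$ into $\mathcal{E}_*^u$. From the second step, $\Theta(z)^*$ maps $\mathcal{E}_*^u$ into $\mathcal{E}^u$ (the closure is handled by boundedness), which says $\Theta(z)\mathcal{E}^p\perp\mathcal{E}_*^u$, i.e. $\Theta(z)\mathcal{E}^p\subseteq\mathcal{E}_*^p$. The compression $\Theta^u=\Theta_0|_{\mathcal{E}^u}:\mathcal{E}^u\to\mathcal{E}_*^u$ is an isometry onto $\mathcal{E}_*^u$, hence a unitary constant. Pure contractivity of $\Theta^p$ means $\|\Theta(0)e\|<\|e\|$ for nonzero $e\in\mathcal{E}^p$, and this holds because any norm-preserving $e$ lies in $\mathcal{E}^u$.

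Uniqueness follows from the same characterization. Suppose another decomposition $\mathcal{E}=\mathcal{F}^p\oplus\mathcal{F}^u$ has these properties. Then $\mathcal{F}^u\subseteq\mathcal{E}^u$. Conversely, write $e\in\mathcal{E}^u$ as $e=e_p+e_u$ with $e_p\in\mathcal{F}^p$ and $e_u\in\mathcal{F}^u$. Because $\Theta(0)$ is block diagonal, $\|e\|^2=\|\Theta(0)e_p\|^2+\|e_u\|^2$, which forces $\|\Theta(0)e_p\|=\|e_p\|$ and therefore $e_p=0$. This gives $\mathcal{F}^u=\mathcal{E}^u$, and then $\mathcal{F}_*^u=\Theta(0)\mathcal{F}^u=\mathcal{E}_*^u$.

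The main obstacle is the first step: passing from norm preservation at the single point $z=0$ to constancy on all of $\mathbb{D}$. This requires the $H^2$ contractivity of $\Theta_+$, i.e. the boundary bound $\|\Theta(e^{it})\|\le1$, which we take as standard. Without it, a maximum-principle argument applied to $\langle\Theta(z)e,\Theta_0e\rangle$ would serve as an alternative.
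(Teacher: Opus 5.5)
Your proof is correct and complete. The paper gives no proof of its own; it quotes this result from Sz.-Nagy--Foia\c{s}, and your argument follows the classical structure: the unitary part is $\mathcal{E}^u=\ker(I-\Theta(0)^*\Theta(0))$ with $\mathcal{E}_*^u=\Theta(0)\mathcal{E}^u$, constancy on $\mathcal{E}^u$ is transferred to $\mathcal{E}_*^u$ through $\Theta(\bar z)^*$, and uniqueness comes from the norm-splitting. For the constancy step you use $\sum_k\|\Theta_k e\|^2\le\|e\|^2$, which is interchangeable with the maximum-principle route you mention and already follows from Parseval on the circles $|z|=r<1$, so boundary values are not needed.
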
 
	
	For a given positive integer \(k \geq 2\), assume that \(\{\mathcal{E}, \mathcal{E}_*, \Theta(z)\}\) and \(\{\mathcal{E}_i, \mathcal{E}_{i+1}, \Theta_i(z)\}\) are contractive analytic functions for $i=1,\dots,k$, where \(\mathcal{E} = \mathcal{E}_1\) and \(\mathcal{E}_* = \mathcal{E}_{k+1}\), such that 
	\begin{align}\label{factorization_theta}
		\Theta(z)=\Theta_k(z)\cdots\Theta_1(z), ~~\text{for all } z \in \mathbb{D}.
	\end{align}
	Let \(\Theta \in B\left(L^2(\mathcal{E}), L^2(\mathcal{E}_*)\right)\) and \(\Theta_i \in B\left(L^2(\mathcal{E}_i), L^2(\mathcal{E}_{i+1})\right)\), for $i=1,\dots,k$, denote the contractions induced by the contractive analytic functions \(\{\mathcal{E}, \mathcal{E}_*, \Theta(z)\}\) and \(\{\mathcal{E}_i, \mathcal{E}_{i+1}, \Theta_i(z)\}\), respectively, as defined in \eqref{induce_contraction}. Following the concept of a \(k\)-regular factorization introduced in \cite{HM26a}, the factorization \eqref{factorization_theta} is called a \emph{\(k\)-regular} factorization if the operator
	\[
	Z_k : \overline{\Delta_{\Theta} L^2(\mathcal{E}_1)}
	\longrightarrow
	\overline{\Delta_k L^2(\mathcal{E}_k)}
	\oplus \cdots \oplus
	\overline{\Delta_1 L^2(\mathcal{E}_1)}
	\]
	obtained as the unique continuous extension of the densely defined isometry
	\begin{align}\label{k_regular}
		Z_k(\Delta_{\Theta} f)
		\coloneqq
		\Delta_k \Theta_{k-1}\cdots\Theta_1 f
		\oplus \cdots \oplus
		\Delta_2 \Theta_1 f
		\oplus
		\Delta_1 f,
		\quad f\in L^2(\mathcal{E}_1),
	\end{align}
	is unitary, where the associated defect operators are defined as $\Delta_{\Theta} = (I - \Theta^* \Theta)^{1/2}$ and $\Delta_i = (I - \Theta_i^* \Theta_i)^{1/2}$ for $i = 1, \dots, k$. The factorization \eqref{factorization_theta} is said to be a \emph{$k$-strange} factorization if it is not $k$-regular, but there exists a $k$-regular factorization 
	\begin{align*}
		\Theta(z)=\Theta'_k(z)\cdots\Theta'_1(z), \quad \text{for all } z \in \mathbb{D},
	\end{align*}
	such that, for each $i=1,\dots,k$, the purely contractive part of \(\{\mathcal{E}_i, \mathcal{E}_{i+1}, \Theta_i(z)\}\) coincides with the purely contractive part of \(\{\mathcal{E}'_i, \mathcal{E}'_{i+1}, \Theta'_i(z)\}\).

	\section{Regularity of the Factorization of Characteristic Functions of Contractions in Block Upper Triangular Form}

	In the article \cite{F73}, C. Foia\c{s} proved that the characteristic function of a c.n.u.\ contraction $T$ does not admit any strange ($2$-strange) factorization if $\Delta_{\Theta_T}(t)= \left(I - \Theta_{T}(e^{it})^{*}\Theta_{T}(e^{it})\right)^{1/2}$ has finite rank for a.e.\  $t\in[0,2\pi]$. In the following proposition, we generalize this result to the case $k>2$.
	
	\begin{proposition}\label{f_d_prop}
		Let $T\in B(\mathcal{H})$ be a c.n.u.\ contraction, and let $\Theta_T$ denote its characteristic function. Suppose that the rank of the associated defect function $\Delta_{\Theta_T}(t)$ is finite for a.e.\  $t\in[0,2\pi]$. Then the characteristic function $\Theta_{T}$ does not admit any $k$-strange factorization.
	\end{proposition}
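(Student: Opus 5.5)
The plan is a fiberwise dimension count. It reduces $k$-regularity to the unitarity of finite-dimensional isometries, almost everywhere on the circle. This generalizes Foia\c{s}' argument in \cite{F73} directly and needs no induction on $k$.

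\textbf{Step 1: $Z_k$ is decomposable.} Let $\Theta_T=\Theta_k\cdots\Theta_1$ be any factorization. All the operators $\Theta$, $\Theta_i$, $\Delta_\Theta$, $\Delta_i$ in \eqref{k_regular} are multiplication operators on $L^2$ spaces of vector functions. Hence they commute with multiplication by $e^{it}$, and so do the maps appearing in \eqref{k_regular}. Consequently $Z_k$ intertwines multiplication by $e^{it}$ on $\overline{\Delta_{\Theta_T}L^2(\mathcal{E}_1)}$ with multiplication by $e^{it}$ on $\bigoplus_i\overline{\Delta_iL^2(\mathcal{E}_i)}$.

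By the standard description of such subspaces, $\overline{\Delta_{\Theta_T}L^2(\mathcal{E}_1)}$ is the space of $L^2$ functions with values a.e.\ in $\overline{\Delta_{\Theta_T}(t)\mathcal{E}_1}$. The same holds for each $\overline{\Delta_iL^2(\mathcal{E}_i)}$. Therefore $Z_k$ is a decomposable operator $(Z_kf)(t)=Z_k(t)f(t)$, where for a.e.\ $t$
\[
Z_k(t)\bigl(\Delta_{\Theta_T}(t)x\bigr)=\Delta_k(t)\Theta_{k-1}(e^{it})\cdots\Theta_1(e^{it})x\oplus\cdots\oplus\Delta_1(t)x .
\]
Each $Z_k(t)$ is an isometry from $\overline{\Delta_{\Theta_T}(t)\mathcal{E}_1}$ into $\bigoplus_i\overline{\Delta_i(t)\mathcal{E}_i}$. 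The isometry property follows pointwise from the telescoping identity
\[
I-\Theta^*\Theta=\sum_i (\Theta_{i-1}\cdots\Theta_1)^*\,\Delta_i^2\,(\Theta_{i-1}\cdots\Theta_1).
\]
Moreover $Z_k$ is unitary if and only if $Z_k(t)$ is unitary for a.e.\ $t$. I expect this to be the main technical obstacle: measurability of the fields, and the identification of the closed ranges with the measurable range functions. I would handle it by passing to a countable dense set of functions $\Delta_{\Theta_T}f_n$ and applying the usual Lebesgue-point and range-function arguments.

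\textbf{Step 2: Defect ranks depend only on purely contractive parts.} Write $\Theta_i=\Theta_i^p\oplus\Theta_i^u$ with $\Theta_i^u$ a unitary constant. Then $\Delta_i(t)=\Delta_{\Theta_i^p}(t)\oplus 0$, so $\operatorname{rank}\Delta_i(t)=\operatorname{rank}\Delta_{\Theta_i^p}(t)$. Hence two factorizations whose factors have coinciding purely contractive parts have the same defect ranks $n_i(t):=\operatorname{rank}\Delta_i(t)$ for a.e.\ $t$.

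\textbf{Step 3: Conclusion.} Suppose, for contradiction, that $\Theta_T=\Theta_k\cdots\Theta_1$ is $k$-strange, and let $\Theta_T=\Theta_k'\cdots\Theta_1'$ be a $k$-regular factorization with the same purely contractive parts.
\begin{enumerate}[\rm(i)]
\item By Step 1 applied to the primed factorization, $Z_k'(t)$ is unitary a.e.
\item Hence $n(t):=\operatorname{rank}\Delta_{\Theta_T}(t)=\sum_i\operatorname{rank}\Delta_i'(t)$ a.e., and this is finite by hypothesis.
\item By Step 2, $\sum_i n_i(t)=n(t)<\infty$ a.e.
\item So $Z_k(t)$ is an isometry between finite-dimensional spaces of the same dimension $n(t)$, hence unitary a.e.
\item By Step 1, $Z_k$ is unitary, i.e.\ the factorization is $k$-regular. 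This contradicts the definition of $k$-strange.
\end{enumerate}
The c.n.u.\ assumption only guarantees that $\Theta_T$ is the relevant purely contractive characteristic function. It plays no further role in the argument.
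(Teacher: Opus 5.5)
Your proof is correct and follows the paper's route. Like the paper, you use the $k$-regular companion factorization to get $\dim\overline{\Delta_{\Theta_T}(t)\mathcal{E}_1}=\sum_i\dim\overline{\Delta_i'(t)\mathcal{E}_i'}$ a.e., transfer this to the given factorization because defect ranks depend only on the purely contractive parts, and conclude regularity from finite-dimensionality. The only difference is that the paper cites Propositions~2.13 and 2.7(iv) of \cite{HM26a} for the a.e.\ boundary reduction and the finite-dimensional dimension criterion, whereas you derive both directly from the decomposability of $Z_k$.
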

	\begin{proof}
		Suppose there exists a $k$-strange factorization 
		\begin{align}\label{theta_k_factor}
			\Theta_{T}(z)=\Theta_k(z)\cdots\Theta_1(z), \quad \text{for all } z \in \mathbb{D}.
		\end{align}
		Then, there exists a $k$-regular factorization 
		\begin{align}\label{Theta'}
			\Theta_{T}(z)=\Theta'_k(z)\cdots\Theta'_1(z), \quad \text{for all } z \in \mathbb{D},
		\end{align}
		such that, for each $i=1,\dots,k$, the purely contractive part of \(\{\mathcal{E}_i, \mathcal{E}_{i+1}, \Theta_i(z)\}\) coincides with the purely contractive part of \(\{\mathcal{E}'_i, \mathcal{E}'_{i+1}, \Theta'_i(z)\}\). Consequently, 
		\begin{align}\label{dimensional_equal}
			\dim(\overline{\im(\Delta_i(t))}) = \dim(\overline{\im(\Delta'_i(t))}) .
		\end{align}
		Because the factorization \eqref{Theta'} is $k$-regular, Proposition 2.13 of \cite{HM26a} implies that the corresponding boundary factorization
		\begin{align*}
			\Theta(e^{it})=\Theta'_k(e^{it})\cdots\Theta'_1(e^{it}),
		\end{align*}
		is a $k$-regular factorization for a.e.\  $t\in[0,2\pi]$. Furthermore, by Proposition 2.7(iv) of \cite{HM26a}, we have
		\[
		\dim(\overline{\im(\Delta_{\Theta_T}(t))}) = \dim(\overline{\im(\Delta'_k(t))}) + \cdots + \dim(\overline{\im(\Delta'_1(t))}).
		\]
		Invoking the relation \eqref{dimensional_equal}, we therefore obtain
		\[
		\dim(\overline{\im(\Delta_{\Theta_T}(t))}) = \dim(\overline{\im(\Delta_k(t))}) + \cdots + \dim(\overline{\im(\Delta_1(t))}).
		\]
		Given that $\dim(\overline{\im(\Delta_{\Theta_T}(t))})$ is finite for a.e.  $t\in[0,2\pi]$, an application of Propositions 2.7(iv) and 2.13 from \cite{HM26a} implies that the factorization \eqref{theta_k_factor} must be $k$-regular, which contradicts our assumption.	
	\end{proof}
	
	It was established by B. Sz.-Nagy in \cite{N70} that, for a pure contraction $T$, the characteristic function $\Theta_T$ admits no strange ($2$-strange) factorization. The following proposition provides a generalization of this result for $k>2$.

	\begin{proposition}\label{pure_prop}
		Let $T\in B(\mathcal{H})$ be a pure contraction. Then the characteristic function $\Theta_{T}$ does not admit any $k$-strange factorization.
	\end{proposition}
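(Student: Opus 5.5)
Suppose, for contradiction, that $\Theta_T(z)=\Theta_k(z)\cdots\Theta_1(z)$ is a $k$-strange factorization. Then there is a $k$-regular factorization $\Theta_T=\Theta'_k\cdots\Theta'_1$ whose factors have the same purely contractive parts as the $\Theta_i$. The plan is to use that $T$ is pure, so $\Theta_T$ is inner, i.e.\ $(\Theta_T)_+$ is an isometry. We then push this rigidity through both factorizations and show that $Z_k$ for the original factorization is already unitary.

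\textbf{Step 1: every factor of a factorization of an inner function is inner.} Since $\Theta_{T,+}=\Theta_{k,+}\cdots\Theta_{1,+}$ is an isometry on $H^2(\mathcal{E}_1)$ and every factor is a contraction, we get $\|\Theta_{1,+}u\|=\|u\|$ for all $u$. Hence $\Theta_1$ is inner. Inductively, each partial product $\Theta_{j,+}\cdots\Theta_{1,+}$ is isometric, and the outermost factor is isometric on the range of the inner partial products.

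This isometry on a range is not yet isometry of the whole factor $\Theta_j$ for $j\ge2$. So instead of trying to prove that every $\Theta_j$ is inner, I would work directly with the defect spaces. Because $\Theta_T(e^{it})$ is an isometry a.e., $\Delta_{\Theta_T}=0$, so $\overline{\Delta_{\Theta_T}L^2(\mathcal{E}_1)}=\{0\}$. The same holds for the primed factorization.

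\textbf{Step 2: $k$-regularity of the primed factorization.} Since the domain of $Z'_k$ is zero, $k$-regularity means the target $\bigoplus_i\overline{\Delta'_iL^2(\mathcal{E}'_i)}$ is zero. Thus each $\Delta'_i=0$ a.e., i.e.\ each $\Theta'_i(e^{it})$ is isometric a.e.

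\textbf{Step 3: transfer to the original factorization.} Each $\Theta'_i$ equals its purely contractive part $\oplus$ a unitary constant. So the purely contractive part $\Theta_i^{p}=\Theta_i'^{p}$ also has isometric boundary values a.e. Then $\Theta_i=\Theta_i^p\oplus\Theta_i^u$ has $\Delta_i(t)=\Delta_{\Theta_i^p}(t)\oplus 0=0$ a.e. Consequently the target space of $Z_k$ for the original factorization is $\{0\}$. Since the domain is also $\{0\}$, $Z_k$ is trivially unitary, so the factorization is $k$-regular. This contradicts its being $k$-strange.

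\textbf{Main obstacle.} The delicate point is Step 2. Its reasoning is tautological once the definitions are unwound, but I must double-check it against the precise definition in \eqref{k_regular}. The concern is whether $k$-regularity, as defined, indeed forces the target spaces to vanish when the domain vanishes. Unitarity of a map from $\{0\}$ forces this, and it matches the dimension identity of Proposition~2.7(iv) of \cite{HM26a} used in Proposition~\ref{f_d_prop}. Alternatively, Step 3 is itself a special case of that proposition's argument with finite (zero) rank. Indeed, purity gives $\Theta_T$ inner with $\operatorname{rank}\Delta_{\Theta_T}(t)=0$ a.e. A pure contraction is c.n.u., so Proposition~\ref{f_d_prop} applies directly. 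This gives the shortest route: observe that $T$ pure implies c.n.u. and $\Theta_T$ inner, hence $\Delta_{\Theta_T}(t)=0$ a.e., and invoke Proposition~\ref{f_d_prop}.
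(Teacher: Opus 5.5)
Your argument is correct, and it takes a genuinely more elementary route than the paper's.

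\textbf{The paper's route.} Given the $k$-regular factorization $\Theta_T=\Theta'_k\cdots\Theta'_1$, the paper invokes Theorem~3.5 and Corollary~3.6 of \cite{HM26a} to represent the functional model $T_{\Theta_T}$ in block upper triangular form. The characteristic functions of its diagonal blocks coincide with the purely contractive parts of the $\Theta'_i$. Because $T_{\Theta_T}$ is pure, so are these diagonal blocks, hence those purely contractive parts are inner. Only then does the paper transfer innerness to the $\Theta_i$ and conclude with Proposition~2.7(ii) of \cite{HM26a}.

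\textbf{Your route.} You reach the same intermediate fact, $\Delta'_i=0$ for every $i$, directly from the definition. Since $\Delta_{\Theta_T}=0$, the domain of $Z'_k$ is $\{0\}$, so unitarity forces $\bigoplus_i\overline{\Delta'_iL^2(\mathcal{E}'_i)}=\{0\}$. Your final observation, that the domain and target of $Z_k$ are both $\{0\}$, replaces the appeal to Proposition~2.7(ii).

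\textbf{What each buys.} Your argument needs no model theory and applies verbatim to any inner contractive analytic function. It also shows that a factorization of an inner function is $k$-regular exactly when every factor is inner. The paper's route, in exchange, exhibits the operator-theoretic mechanism (purity passing to the diagonal blocks of the model), in the spirit of Sz.-Nagy's argument in \cite{N70}. Your closing remark is also right: a pure contraction is c.n.u.\ and $\Delta_{\Theta_T}(t)=0$ a.e., so the statement is the rank-zero case of Proposition~\ref{f_d_prop}. That proposition, however, rests on the heavier Propositions~2.7(iv) and 2.13 of \cite{HM26a}.

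\textbf{Presentation fixes.} The heading of your Step~1 (``every factor of a factorization of an inner function is inner'') is false as stated. For instance, take $\Theta_1=\begin{bmatrix}1\\0\end{bmatrix}$ and $\Theta_2=\begin{bmatrix}1&0\\0&0\end{bmatrix}$: the product is inner but $\Theta_2$ is not. You never use this claim, so drop it. In Step~3, ``coincide'' means equality up to constant unitary factors on both sides rather than literal equality; this of course preserves $\Delta(t)=0$ a.e.
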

	\begin{proof}
		Assume there exists a $k$-strange factorization \begin{align}\label{theta_k_factor_}
			\Theta_{T}(z)=\Theta_k(z)\cdots\Theta_1(z), \quad \text{for all } z \in \mathbb{D}.
		\end{align}
		Then, there exists a $k$-regular factorization 
		\begin{align}\label{Theta'_}
			\Theta_{T}(z)=\Theta'_k(z)\cdots\Theta'_1(z), \quad \text{for all } z \in \mathbb{D},
		\end{align}
		such that, for each $i=1,\dots,k$, the purely contractive part of \(\{\mathcal{E}_i, \mathcal{E}_{i+1}, \Theta_i(z)\}\) coincides with the purely contractive part of \(\{\mathcal{E}'_i, \mathcal{E}'_{i+1}, \Theta'_i(z)\}\). Since the factorization \eqref{Theta'_} is $k$-regular, Theorem 3.5 of \cite{HM26a} ensures that the Sz.-Nagy and Foia\c{s} functional model operator $T_{\Theta_{T}}$ associated with $\Theta_{T}$ admits a block upper triangular matrix representation. Moreover, by Corollary~3.6 of \cite{HM26a}, for each $i=1,\dots,k$, the characteristic function of the diagonal block $P_{\mathcal{H}_i}T_{\Theta_{T}}|_{\mathcal{H}_i}$ coincides with the purely contractive part of \(\{\mathcal{E}'_i, \mathcal{E}'_{i+1}, \Theta'_i(z)\}\).
		
		Since $T$ is a pure contraction, its functional model $T_{\Theta_T}$ is also pure. Because the compression of a pure operator remains pure, it follows that for $i=1,\dots,k$, the diagonal block $P_{\mathcal{H}_i}T_{\Theta_{T}}|_{\mathcal{H}_i}$ is a pure contraction. This implies that the characteristic function of $P_{\mathcal{H}_i}T_{\Theta_{T}}|_{\mathcal{H}_i}$ is an inner function. Consequently, the purely contractive part of \(\{\mathcal{E}'_i, \mathcal{E}'_{i+1}, \Theta'_i(z)\}\) is inner, and therefore, the purely contractive part of \(\{\mathcal{E}_i, \mathcal{E}_{i+1}, \Theta_i(z)\}\) must be inner as well. This dictates that the corresponding operator \(\Theta_i \in B\left(L^2(\mathcal{E}_i), L^2(\mathcal{E}_{i+1})\right)\) is an isometry. Finally, invoking Proposition 2.7(ii) of \cite{HM26a}, we conclude that the factorization \eqref{theta_k_factor_} is a $k$-regular factorization, which yields a contradiction.
				\end{proof}

	Let \(T \in B(\mathcal{H})\) be a contraction with a closed invariant subspace \(\mathcal{H}_1\). Under these conditions, Sz.-Nagy and Foia\c{s} established a canonical factorization for the characteristic function of $T$, as detailed in the following theorem. 
	
	\begin{theorem}[\cite{NF67}, Theorem 1] \label{trm:1}
		Let \(\mathcal{H}_1\) and \(\mathcal{H}_2\) be Hilbert spaces, and let
		\[
		T =
		\begin{bmatrix}
			A & X \\
			0 & B
		\end{bmatrix} :\mathcal H_1 \oplus\mathcal H_2 \to\mathcal H_1 \oplus\mathcal H_2
		\]
		be a contraction on \(\mathcal{H}_1 \oplus \mathcal{H}_2\). Then there exists a contraction \(L \in B(\mathcal{D}_{B}, \mathcal{D}_{A^*})\) and canonical unitary operators \(\tau \in B(\mathcal{D}_T, \mathcal{D}_{A} \oplus \mathcal{D}_L)\) and \(\tau_* \in B(\mathcal{D}_{T^*}, \mathcal{D}_{B^*} \oplus \mathcal{D}_{L^*})\) such that \(X = D_{A^*} L D_{B}\), and
		\begin{equation} \label{J_factorization}
			\Theta_T(z) =
			\tau_*^{-1}
			\begin{bmatrix}
				\Theta_{B}(z) & 0 \\
				0 & I_{\mathcal{D}_{L^*}}
			\end{bmatrix}
			\begin{bmatrix}
				L^* & D_L \\
				D_{L^*} & -L
			\end{bmatrix}
			\begin{bmatrix}
				\Theta_{A}(z) & 0 \\
				0 & I_{\mathcal{D}_L}
			\end{bmatrix}
			\tau~ \text{ for all }z \in \mathbb{D}.
		\end{equation}
	\end{theorem}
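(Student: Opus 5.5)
We follow the original argument of Sz.-Nagy and Foia\c{s}: compute $\Theta_T(z)$ directly from its definition, using the block form of $T$ and the coupling operator $L$. The existence of a contraction $L\in B(\mathcal{D}_B,\mathcal{D}_{A^*})$ with $X=D_{A^*}LD_B$ is exactly Theorem~\ref{lemma:1}, so the work lies in building $\tau,\tau_*$ and verifying \eqref{J_factorization}. Substituting $X=D_{A^*}LD_B$, we first compute
\[
I-T^*T=\begin{bmatrix} D_A^2 & -A^*D_{A^*}LD_B\\ -D_BL^*D_{A^*}A & D_B^2-D_BL^*D_{A^*}^2LD_B\end{bmatrix}.
\]
Using $A^*D_{A^*}=D_AA^*$ and $D_{A^*}^2=I-AA^*$, we rewrite $\|D_T(h_1\oplus h_2)\|^2$ as
\[
\|D_Ah_1-A^*LD_Bh_2\|^2+\|D_LD_Bh_2\|^2 .
\]
This identity suggests defining
\[
\tau(D_T(h_1\oplus h_2)) := (D_Ah_1-A^*LD_Bh_2)\oplus D_LD_Bh_2 .
\]
The first component lies in $\mathcal{D}_A$ because $A^*\mathcal{D}_{A^*}\subseteq\mathcal{D}_A$. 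So $\tau$ is an isometry from $\mathcal{D}_T$ into $\mathcal{D}_A\oplus\mathcal{D}_L$. It is onto: taking $h_2=0$ gives all of $\mathcal{D}_A\oplus 0$, and then varying $h_2$ gives $0\oplus\overline{D_LD_B\mathcal{H}_2}=0\oplus\mathcal{D}_L$. Symmetrically, a computation of $I-TT^*$ gives
\[
\tau_*(D_{T^*}(h_1\oplus h_2)) := D_{L^*}D_{A^*}h_1\oplus(D_{B^*}h_2-BLD_{A^*}h_1)\in\mathcal{D}_{L^*}\oplus\mathcal{D}_{B^*},
\]
after reordering the summands as in the statement.

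Next we verify the identity. Since both sides are analytic, it suffices to compare them against a dense set of vectors. Write
\[
\Theta_T(z)D_T=-TD_T+zD_{T^*}(I-zT^*)^{-1}D_T^2 .
\]
It is more convenient to use the intertwining characterization of the characteristic function: for $\lambda\in\mathbb{D}$, $\Theta_T(\lambda)$ is the unique operator with
\[
\Theta_T(\lambda)D_T=D_{T^*}(I-\lambda T^*)^{-1}(\lambda I-T)\big|_{\mathcal{D}_T}.
\]
We therefore show that $\tau_*^{-1}(\text{RHS})\tau D_T$ equals $D_{T^*}(I-\lambda T^*)^{-1}(\lambda-T)$. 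To do this we invert the triangular operator $I-\lambda T^*$, which is lower triangular with diagonal entries $I-\lambda A^*$ and $I-\lambda B^*$, and use the analogous identities $\Theta_A(\lambda)D_A=D_{A^*}(I-\lambda A^*)^{-1}(\lambda-A)$ for $A$ and for $B$. Applying the right side to $\tau D_T(h_1\oplus h_2)$ means computing the three factors in order:
\begin{enumerate}[\rm(i)]
\item $\Theta_A(\lambda)$ acts on the first component;
\item the Julia matrix mixes this with the $\mathcal{D}_L$ component via $L^*,D_L,D_{L^*},-L$;
\item $\Theta_B(\lambda)$ acts on the component that lands in $\mathcal{D}_B$.
\end{enumerate}
We then match the result component-wise with $\tau_*D_{T^*}(I-\lambda T^*)^{-1}(\lambda-T)(h_1\oplus h_2)$. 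Throughout, the identities $LD_L=D_{L^*}L$, $D_L^2+L^*L=I$, $A D_A=D_{A^*}A$ and their analogues for $B$ should make the cross terms cancel.

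The main obstacle is this final bookkeeping. The $(2,1)$ entry of $(I-\lambda T^*)^{-1}$ is
\[
\lambda(I-\lambda B^*)^{-1}D_BL^*D_{A^*}(I-\lambda A^*)^{-1},
\]
and it generates terms that must be recombined into $\Theta_B(\lambda)L^*\Theta_A(\lambda)$ plus the $D_{L^*}$ and $D_L$ contributions. The first thing we will try is to check the identity separately on vectors with $h_2=0$ and with $h_1=0$. In each case one of the components of $\tau D_T(h_1\oplus h_2)$ simplifies, namely $D_Ah_1$ or $-A^*LD_Bh_2\oplus D_LD_Bh_2$. If direct matching stalls, the fallback is to compare Taylor coefficients in $\lambda$, using the power series $\Theta_T(\lambda)=-T+\sum_{n\ge1}\lambda^nD_{T^*}T^{*(n-1)}D_T$ together with the triangular structure of the powers $T^{*(n-1)}$.
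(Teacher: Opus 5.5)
The paper gives no proof of this statement; it is quoted from \cite{NF67}. Your route is the original direct computation, and your setup is sound.

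\textbf{Minor corrections.} Your $\tau$ is correct. Your $\tau_*$ is correct once $BLD_{A^*}h_1$ is changed to $BL^*D_{A^*}h_1$, since $L$ does not act on $\mathcal{D}_{A^*}$. The identity $\Theta_T(\lambda)D_T=D_{T^*}(I-\lambda T^*)^{-1}(\lambda-T)$ holds on all of $\mathcal{H}_1\oplus\mathcal{H}_2$, so the restriction to $\mathcal{D}_T$ should be dropped. What reduces the claim to checking $\tau_*D_{T^*}(I-\lambda T^*)^{-1}(\lambda-T)h=M\tau D_Th$, with $M$ the product of the three middle factors, is density of $D_T(\mathcal{H}_1\oplus\mathcal{H}_2)$ in $\mathcal{D}_T$, not analyticity.

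\textbf{The gap.} The proposal stops exactly where the content of the theorem lies: the factorization is left as ``the cross terms should cancel'', with a fallback in case this stalls. Your tool $\Theta_A(\lambda)D_A=D_{A^*}(I-\lambda A^*)^{-1}(\lambda-A)$ handles the piece $D_Ah_1$ of $\tau D_Th$. It does not handle the other piece $-A^*LD_Bh_2$, because $LD_Bh_2$ is merely some vector of $\mathcal{D}_{A^*}$. The missing ingredient is
\[
\Theta_A(\lambda)A^*y=D_{A^*}(I-\lambda A^*)^{-1}D_{A^*}y-y,\qquad y\in\mathcal{D}_{A^*}.
\]
It follows from the definition of $\Theta_A$ together with $D_AA^*=A^*D_{A^*}$, $D_{A^*}^2=I-AA^*$ and $I+\lambda(I-\lambda A^*)^{-1}A^*=(I-\lambda A^*)^{-1}$.

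\textbf{How the computation closes.} Let $u=u_1\oplus u_2=(I-\lambda T^*)^{-1}(\lambda-T)h$, so that $u_1=(I-\lambda A^*)^{-1}\bigl[(\lambda-A)h_1-D_{A^*}LD_Bh_2\bigr]$, and set $w=L^*D_{A^*}u_1$.

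First, the identity above gives
\[
\Theta_A(\lambda)\bigl(D_Ah_1-A^*LD_Bh_2\bigr)=D_{A^*}u_1+LD_Bh_2 .
\]

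Second, apply the Julia matrix and use $L^*L+D_L^2=I$ and $D_{L^*}L=LD_L$. The two components become $w+D_Bh_2$ and $D_{L^*}D_{A^*}u_1$. The latter is already the $\mathcal{D}_{L^*}$-component of $\tau_*D_{T^*}u$.

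Third, your $(2,1)$ entry of $(I-\lambda T^*)^{-1}$ gives $u_2=(I-\lambda B^*)^{-1}\bigl[\lambda D_Bw+(\lambda-B)h_2\bigr]$. Hence
\[
D_{B^*}u_2-Bw=\bigl[-B+\lambda D_{B^*}(I-\lambda B^*)^{-1}D_B\bigr]w+D_{B^*}(I-\lambda B^*)^{-1}(\lambda-B)h_2=\Theta_B(\lambda)\bigl(w+D_Bh_2\bigr),
\]
which is exactly what the third factor produces from $w+D_Bh_2$.

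So the $(2,1)$ entry you flagged is not an obstacle. Combined with the $-BL^*D_{A^*}$ term in $\tau_*$, it assembles precisely $\Theta_B(\lambda)w$. Neither the split into $h_1=0$ and $h_2=0$ nor a Taylor-coefficient comparison is needed.
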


	With the aid of this theorem, we establish the following result. 
	\begin{proposition}\label{prop:pure-block}
		Suppose $\mathcal{H}_1$ and $\mathcal{H}_2$ are Hilbert spaces, and let
		\[
		T=\begin{bmatrix} A & D_{A^*} L D_B \\[4pt] 0 & B \end{bmatrix} :\mathcal H_1 \oplus\mathcal H_2 \to\mathcal H_1 \oplus\mathcal H_2
		\]
		be a c.n.u.\ contraction. Then \(T\) is a pure contraction if and only if both \(A\) and \(B\) are pure contractions. 
	\end{proposition}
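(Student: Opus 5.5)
The plan is to prove the two implications separately, working with the adjoint, since $\mathcal H_1$ invariant for $T$ means $\mathcal H_2$ is invariant for $T^*$. With respect to $\mathcal H_1\oplus\mathcal H_2$ we have
\[
T^*=\begin{bmatrix} A^* & 0\\ D_BL^*D_{A^*} & B^*\end{bmatrix},
\]
so $T^*|_{\mathcal H_2}=B^*$ and, for every $n$, $T^n|_{\mathcal H_1}=A^n$, which gives $P_{\mathcal H_1}T^{*n}|_{\mathcal H_1}=A^{*n}$.

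\emph{Necessity.} Suppose $T$ is pure. For $x_2\in\mathcal H_2$, invariance gives $B^{*n}x_2=T^{*n}x_2\to0$, so $B$ is pure. For $x_1\in\mathcal H_1$, we have $\|A^{*n}x_1\|=\|P_{\mathcal H_1}T^{*n}x_1\|\le\|T^{*n}x_1\|\to0$, so $A$ is pure. This direction is routine and does not use the c.n.u.\ hypothesis.

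\emph{Sufficiency.} I would take the route suggested by Theorem~\ref{trm:1}. If $A$ and $B$ are pure, they are c.n.u., so $\Theta_A$ and $\Theta_B$ are inner. I then check that each factor in \eqref{J_factorization} induces an isometry on $H^2$:
\begin{itemize}
\item[(i)] the first factor $\operatorname{diag}(\Theta_A,I_{\mathcal D_L})\tau$ is a direct sum of an isometry and the identity, composed with a constant unitary;
\item[(ii)] the Julia--Halmos matrix $J_L$ is a constant unitary;
\item[(iii)] the third factor $\tau_*^{-1}\operatorname{diag}(\Theta_B,I_{\mathcal D_{L^*}})$ is likewise isometric.
\end{itemize}
Since $(\Theta_T)_+$ is the composition of these three $H^2$-operators, it is an isometry, so $\Theta_T$ is inner. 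Because $T$ is c.n.u.\ by hypothesis, the stated equivalence (a c.n.u.\ contraction is pure iff its characteristic function is inner) yields that $T$ is pure. The only step needing care is the identification $(\Theta_T)_+=(\text{third})_+(J_L)_+(\text{first})_+$. This holds because multiplication operators on $H^2$ compose according to the pointwise product of the functions, and the constant unitaries $\tau,\tau_*$ act pointwise.

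As a safeguard against any subtlety in the inner/pure equivalence, I would also record a direct argument. Let $x=x_1\oplus x_2$ and fix $\varepsilon>0$. Choose $m$ with $\|A^{*m}x_1\|<\varepsilon$. Then $T^{*m}x=A^{*m}x_1\oplus y$ for some $y\in\mathcal H_2$. Since $\mathcal H_2$ is $T^*$-invariant, for $n\ge m$ we get
\[
T^{*n}x=T^{*(n-m)}A^{*m}x_1+B^{*(n-m)}y .
\]
Hence $\limsup_n\|T^{*n}x\|\le\varepsilon$, because $\|T^{*(n-m)}A^{*m}x_1\|\le\|A^{*m}x_1\|<\varepsilon$ and $B^{*(n-m)}y\to0$. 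As $\varepsilon$ is arbitrary, $T^{*n}x\to0$ and $T$ is pure.
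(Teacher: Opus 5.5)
Your proof is correct. The necessity part and your first sufficiency argument are the same as the paper's. The paper also obtains purity of $A$ and $B$ from the compressions $A^{*n}=P_{\mathcal H_1}T^{*n}|_{\mathcal H_1}$ and $B^{*n}=P_{\mathcal H_2}T^{*n}|_{\mathcal H_2}$. For the converse, it observes that $\Theta_A,\Theta_B$ inner forces $\Theta_T$ inner through the factorization \eqref{J_factorization}. It then uses Proposition~3.5 of Chapter~VI in \cite{NFBK10} together with the c.n.u.\ hypothesis.

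Your direct $\varepsilon$-argument is a genuinely different route. It uses only the lower-triangular form of $T^{*n}$ and the $T^*$-invariance of $\mathcal H_2$. It needs neither characteristic functions nor the functional-model equivalence between purity and innerness. It also never uses that $T$ is c.n.u., so it shows the proposition holds for every contraction of this block form. This is consistent, since a pure contraction is automatically c.n.u. What the paper's route buys is a link between the statement and the factorization \eqref{J_factorization} that the section studies. Its cost is a hypothesis that your elementary argument shows is unnecessary.
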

	
	\begin{proof}
		Observe that 
		\begin{align}
			A^{*n} = P_{\mathcal H_1} T^{*n}|_{\mathcal H_1} 
			\quad \text{and} \quad 
			B^{*n} = P_{\mathcal H_2} T^{*n}|_{\mathcal H_2}, \quad n \geq 1.
		\end{align}
		Hence, if \(T\) is pure, it follows immediately that both \(A\) and \(B\) are pure contractions. 
		
		Conversely, suppose that \(A\) and \(B\) are pure contractions. By Proposition~3.5 of Chapter~VI in \cite{NFBK10}, a c.n.u.\ contraction is pure if and only if its characteristic function is inner. Since \(\Theta_A\) and \(\Theta_B\) are inner, it follows from the factorization \eqref{J_factorization} that \(\Theta_T\) is also inner. Therefore, \(T\) is pure.
	\end{proof}
	%%%%%%%%%%%%%%%%%%%%%%%%%%%%%%%%%%%%%%%%%%%%
	
	In the Theorem \ref{trm:1}, let us introduce the following notation:
	\[
	\Theta'_3(z) := \tau_*^{-1}
	\begin{bmatrix}
		\Theta_{B}(z) & 0 \\
		0 & I_{\mathcal{D}_{L^*}}
	\end{bmatrix},~	\Theta'_2(z) := 
	\begin{bmatrix}
		L^* & D_L \\
		D_{L^*} & -L
	\end{bmatrix} ~
	\Theta'_1(z) := 
	\begin{bmatrix}
		\Theta_{A}(z) & 0 \\
		0 & I_{\mathcal{D}_L}
	\end{bmatrix} \tau, \quad z \in \mathbb{D}.
	\]
	With this notation, the characteristic function admits the factorization
	\begin{equation} \label{fact_I}
		\Theta_T(z) = \Theta'_3(z)\Theta'_2(z)\Theta'_1(z), \quad z \in \mathbb{D}.
	\end{equation}
	Our objective is to determine whether the factorization \eqref{fact_I} is $3$-regular.
	
	\begin{theorem} \label{theorem_either}
		Let 
		\(
		T = \begin{bmatrix} A & D_{A^*} L D_{B} \\ 0 & B \end{bmatrix} :\mathcal{H}_1 \oplus\mathcal{H}_2 \to\mathcal{H}_1 \oplus\mathcal{H}_2
		\) 
		be a c.n.u.\ contraction. Then the factorization of the characteristic function \(\Theta_T\) given in \eqref{fact_I} is either a $3$-regular factorization or a $3$-strange factorization.
	\end{theorem}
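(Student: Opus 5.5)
By definition, \eqref{fact_I} is $3$-strange precisely when it fails to be $3$-regular but some $3$-regular factorization $\Theta_T=\Theta''_3\Theta''_2\Theta''_1$ has factors whose purely contractive parts coincide with those of $\Theta'_3,\Theta'_2,\Theta'_1$. So I will exhibit such a $3$-regular factorization unconditionally. Then \eqref{fact_I} is either $3$-regular, or it is not and the exhibited factorization witnesses that it is $3$-strange.

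The first step is to compute the purely contractive parts of the three factors in \eqref{fact_I}.
\begin{enumerate}[\rm(i)]
\item Since $\tau$ is a constant unitary and $I_{\mathcal{D}_L}$ is a unitary constant, the purely contractive part of $\Theta'_1$ coincides with the purely contractive part of $\Theta_A$. Because $\Theta_A$ is itself purely contractive, this is $\Theta_A$.
\item Likewise, the purely contractive part of $\Theta'_3$ coincides with $\Theta_B$.
\item The middle factor is a constant unitary (the Julia--Halmos matrix of $L$), so its purely contractive part is the zero function on the zero space.
\end{enumerate}

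The second step produces a $3$-regular factorization with these parts. Since $T$ is c.n.u., it is unitarily equivalent to its functional model $T_{\Theta_T}$, and $\mathcal{H}_1$ corresponds to an invariant subspace of the model. I will refine the chain $\{0\}\subseteq\mathcal{H}_1\subseteq\mathcal{H}_1\subseteq\mathcal{H}$ into three blocks, the middle one $\{0\}$. Alternatively, apply the Sz.-Nagy--Foia\c{s} correspondence (\cite{NF64,N70}) to get a regular ($2$-regular) factorization $\Theta_T=\Phi_2\Phi_1$, where the purely contractive parts of $\Phi_1,\Phi_2$ coincide with $\Theta_A,\Theta_B$, since the compressions of $T$ to $\mathcal{H}_1$ and $\mathcal{H}_2$ are $A$ and $B$. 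Then insert an identity factor: $\Theta_T=\Phi_2\cdot I_{\mathcal{E}}\cdot\Phi_1$, where $\mathcal{E}$ is the intermediate space. This is $3$-regular, because the middle defect $\Delta_2=0$ and $Z_3$ reduces to the unitary $Z_2$ of the regular factorization. Equivalently, I can invoke Theorem 3.5 / Corollary 3.6 of \cite{HM26a} in the converse direction, or Proposition 2.7 there, to pass from $2$-regular to $3$-regular when a unitary constant factor is inserted.

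The main obstacle is matching purely contractive parts up to coincidence. Coincidence allows unitary constants on both sides, so $\Theta_A$ versus $\Phi_1^p$ is fine. The middle factor $I_{\mathcal{E}}$ has trivial purely contractive part, matching that of the Julia--Halmos matrix. I must still check carefully that the Sz.-Nagy--Foia\c{s} theorem indeed identifies the purely contractive parts of the factors with $\Theta_A$ and $\Theta_B$ for the given decomposition. This uses the identification of the invariant subspace of $T_{\Theta_T}$ corresponding to $\mathcal{H}_1$. With this in hand, non-$3$-regularity of \eqref{fact_I} implies $3$-strangeness by definition.
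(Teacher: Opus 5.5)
Your proposal is correct and follows essentially the same route as the paper: take the regular factorization $\Theta_T=\Theta_2\Theta_1$ supplied by the Sz.-Nagy--Foia\c{s} correspondence (Theorem~1.1 and Proposition~2.1 of Chapter~VII in \cite{NFBK10}), whose factors have purely contractive parts coinciding with $\Theta_A$ and $\Theta_B$. Then insert $I_{\mathcal{D}}$ as the middle factor to obtain a $3$-regular factorization whose purely contractive parts match those of \eqref{fact_I}. The identification you flag as still to be checked is exactly that standard cited result, so no further work is needed there.
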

	
	\begin{proof}
		Since $\mathcal{H}_1$ is a non-trivial invariant subspace for the c.n.u.\ contraction $T$, Theorem 1.1 and Proposition 2.1 of Chapter VII in \cite{NFBK10} guarantee the existence of a regular factorization 
		\begin{align}\label{fact_r}
			\Theta_T(z) = \Theta_2(z)\,\Theta_1(z), \quad z \in \mathbb{D},
		\end{align}
		such that the purely contractive parts of $\{\mathcal{D}_T,\mathcal{D},\Theta_1(z)\}$ and $\{\mathcal{D},\mathcal{D}_{T^*},\Theta_2(z)\}$  coincide with the characteristic functions $\{\mathcal{D}_A,\mathcal{D}_{A*},\Theta_{A}(z)\}$ and $\{\mathcal{D}_B,\mathcal{D}_{B*},\Theta_{B}(z)\}$, respectively. Let us now consider the factorization
		\begin{align}\label{fact_"}
			\Theta_T(z) = \Phi_3(z)\Phi_2(z)\Phi_1(z), \quad z \in \mathbb{D},
		\end{align}
		where 
		\[ \Phi_3(z)=\Theta_2(z),~\Phi_2(z)=I_{\mathcal{D}},~\Phi_1(z)=\Theta_1(z).\]
		Since the factorization \eqref{fact_r} is $2$-regular and $\Phi_2(z)=I_{\mathcal{D}}$, it immediately follows that the factorization \eqref{fact_"} is a $3$-regular factorization. Moreover, it is evident that the purely contractive parts of $\Phi_3(z),~\Phi_2(z),\text{ and }\Phi_1(z)$ coincide with $\Theta'_3(z),~\Theta'_2(z),\text{ and }\Theta'_1(z)$, respectively. Therefore, the result follows.
	\end{proof}

	\begin{corollary}\label{coro:1}
		Let 
		\(
		T = \begin{bmatrix} A & D_{A^*} L D_{B} \\ 0 & B \end{bmatrix} :\mathcal{H}_1 \oplus\mathcal{H}_2 \to\mathcal{H}_1 \oplus\mathcal{H}_2
		\) be a c.n.u.\ contraction such that \(\dim(\Delta_{\Theta_T}(t))<\infty\), for a.e.\  $t\in[0,2\pi].$ Then the factorization of the characteristic function \(\Theta_T\) given in equation {\rm \eqref{fact_I}} is a $3$-regular factorization.
	\end{corollary}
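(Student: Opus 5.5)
By Theorem~\ref{theorem_either}, the factorization \eqref{fact_I} is either $3$-regular or $3$-strange. It therefore suffices to rule out the second alternative. We will do this by invoking Proposition~\ref{f_d_prop}.

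The hypotheses of Proposition~\ref{f_d_prop} are met. $T$ is a c.n.u.\ contraction, and the assumption $\dim(\Delta_{\Theta_T}(t))<\infty$ for a.e.\ $t\in[0,2\pi]$ is to be read as finiteness of $\dim\overline{\im(\Delta_{\Theta_T}(t))}$, which is the rank condition of that proposition. Proposition~\ref{f_d_prop} then says that $\Theta_T$ admits no $k$-strange factorization for any $k$, in particular for $k=3$. Hence \eqref{fact_I} cannot be $3$-strange, and by Theorem~\ref{theorem_either} it must be $3$-regular.

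The only point needing care is that the objects line up. The factors $\Theta'_1,\Theta'_2,\Theta'_3$ are contractive analytic functions: $\Theta'_2$ is the constant Julia--Halmos unitary, and the others are direct sums of characteristic functions with identities, composed with the unitaries $\tau,\tau_*$. Their product is $\Theta_T$ itself, so the dichotomy of Theorem~\ref{theorem_either} and the conclusion of Proposition~\ref{f_d_prop} refer to the same factorization. We expect no real obstacle beyond this bookkeeping.
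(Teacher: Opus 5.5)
Your proposal is correct and is the paper's own argument: Proposition~\ref{f_d_prop} rules out the $3$-strange alternative, and the dichotomy of Theorem~\ref{theorem_either} then forces \eqref{fact_I} to be $3$-regular. You are also right to read the hypothesis as finiteness of $\dim\overline{\im(\Delta_{\Theta_T}(t))}$ for a.e.\ $t$, which is the rank condition of Proposition~\ref{f_d_prop}.
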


	\begin{proof}
		By Proposition~\ref{f_d_prop}, the factorization \eqref{fact_I} cannot be a $3$-strange factorization. Therefore, it follows from Theorem~\ref{theorem_either} that the factorization \eqref{fact_I} is a $3$-regular factorization.
	\end{proof}

	\begin{remark}\label{remark_1}
		Based on the definition of $3$-regularity provided in \eqref{k_regular}, it is evident that the factorization \eqref{fact_I} is $3$-regular if and only if the factorization 
		\begin{align}\label{main_fact}
			\Theta(z)=\Theta_2(z)\Theta_1(z), \quad \text{for all } z \in \mathbb{D},
		\end{align}
		where $\Theta_1(z)= \begin{bmatrix}
			L^* & D_L \\
			D_{L^*} & -L
		\end{bmatrix}
		\begin{bmatrix}
			\Theta_{A}(z) & 0 \\
			0 & I_{\mathcal{D}_L}
		\end{bmatrix}$ and $\Theta_2(z)=\begin{bmatrix}
			\Theta_{B}(z) & 0 \\
			0 & I_{\mathcal{D}_{L^*}}
		\end{bmatrix}$, is a $2$-regular factorization.
	\end{remark}
	
	\begin{lemma}\label{lemma_main}
		The factorization \(\Theta_T(z) = \Theta'_3(z)\Theta'_2(z)\Theta'_1(z)\) given in \eqref{fact_I} is $3$-regular if and only if
		\[
		\Delta_{B}(t)\mathcal{D}_{B} \cap L^*\left[\Delta_{*A}(t)\mathcal{D}_{A^*} \cap \ker(D_{L^*})\right] = \{0\}, \quad \text{for a.e.\ } t\in[0,2\pi],
		\]
		where $\Delta_{B}(t)=\left(I_{\mathcal{D}_B}-\Theta_B^*(e^{it})\Theta_B(e^{it})\right)^{1/2}$ and $\Delta_{*A}(t)=\left(I_{\mathcal{D}_{A^*}}-\Theta_A(e^{it})\Theta_A^*(e^{it})\right)^{1/2}$.
	\end{lemma}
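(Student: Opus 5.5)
By Remark~\ref{remark_1}, it suffices to decide when the two-factor factorization $\Theta(z)=\Theta_2(z)\Theta_1(z)$ of \eqref{main_fact} is $2$-regular. The constant unitaries $\tau$ and $\tau_*^{-1}$ do not affect regularity: multiplying by a unitary constant on the right or on the left only conjugates the defect operators unitarily, so $Z_3$ is unitary exactly when $Z_2$ for \eqref{main_fact} is. For this two-factor problem I will use the classical pointwise criterion of Sz.-Nagy and Foia\c{s} (Chapter~VII of \cite{NFBK10}, Proposition~3.3). It says that $\Theta=\Theta_2\Theta_1$, with intermediate space $\mathcal{F}$, is regular if and only if
\[
\Delta_2(t)\mathcal{F}\cap\Delta_{1*}(t)\mathcal{F}=\{0\}\quad\text{for a.e.\ } t\in[0,2\pi],
\]
where $\Delta_{1*}(t)=(I-\Theta_1(e^{it})\Theta_1(e^{it})^*)^{1/2}$. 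Alternatively, this is the $k=2$ case of the boundary characterization in Proposition~2.13 of \cite{HM26a}. Here the middle space is $\mathcal{F}=\mathcal{D}_B\oplus\mathcal{D}_{L^*}$.

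The next step is to compute the two defect ranges on the boundary. Since $\Theta_2(e^{it})=\Theta_B(e^{it})\oplus I_{\mathcal{D}_{L^*}}$, we get $\Delta_2(t)=\Delta_B(t)\oplus 0$. Hence
\[
\Delta_2(t)\mathcal{F}=\Delta_B(t)\mathcal{D}_B\oplus\{0\}.
\]
For the other factor write $J_L=\begin{bmatrix} L^* & D_L\\ D_{L^*} & -L\end{bmatrix}$. This Julia--Halmos matrix is unitary from $\mathcal{D}_{A^*}\oplus\mathcal{D}_L$ onto $\mathcal{D}_B\oplus\mathcal{D}_{L^*}$, using $LD_L=D_{L^*}L$. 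Therefore
\[
\Theta_1\Theta_1^*=J_L\big(\Theta_A\Theta_A^*\oplus I_{\mathcal{D}_L}\big)J_L^*,
\]
and consequently
\[
\Delta_{1*}(t)=J_L\big(\Delta_{*A}(t)\oplus 0\big)J_L^*.
\]
Since $J_L^*$ is onto, this gives
\[
\Delta_{1*}(t)\mathcal{F}=J_L\big(\Delta_{*A}(t)\mathcal{D}_{A^*}\oplus\{0\}\big)=\{(L^*x,\,D_{L^*}x): x\in\Delta_{*A}(t)\mathcal{D}_{A^*}\}.
\]

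Intersecting the two ranges, a vector lies in both exactly when it equals $(L^*x,D_{L^*}x)$ for some $x\in\Delta_{*A}(t)\mathcal{D}_{A^*}$ with $D_{L^*}x=0$ and $L^*x\in\Delta_B(t)\mathcal{D}_B$. The intersection is therefore $\{(y,0)\}$ with $y$ ranging over
\[
\Delta_B(t)\mathcal{D}_B\cap L^*\big[\Delta_{*A}(t)\mathcal{D}_{A^*}\cap\ker D_{L^*}\big].
\]
So the criterion holds if and only if the stated condition holds for a.e.\ $t$, which proves the lemma.

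The main obstacle is the pointwise criterion itself. One has to be careful that the classical statement is phrased with operator ranges, not closures, and holds for a.e.\ $t$. I also need that the lifting from $L^2$ to pointwise boundary values is legitimate, namely that the $Z$-operator decomposes as a direct integral of the pointwise operators. If citing \cite{NFBK10} directly is problematic, I would instead invoke Proposition~2.13 of \cite{HM26a}. That reduces $3$-regularity to a.e.\ regularity of the constant boundary factorizations. I would then verify the intersection criterion for products of constant contractions by the standard finite-dimensional-free argument, namely that $Z$ is unitary iff $\ker Z^*=0$.
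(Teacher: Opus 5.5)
Your proposal is correct and follows the paper's route. You reduce via Remark~\ref{remark_1} to $2$-regularity of \eqref{main_fact}, apply the a.e.\ pointwise criterion $\Delta_2(t)(\mathcal{D}_B\oplus\mathcal{D}_{L^*})\cap\Delta_{*1}(t)(\mathcal{D}_B\oplus\mathcal{D}_{L^*})=\{0\}$, and compute the intersection from $\Delta_2(t)=\Delta_B(t)\oplus 0$ and $\Delta_{*1}(t)=J_L(\Delta_{*A}(t)\oplus 0)J_L^*$. The only difference is the citation: for the range (not closure) form of the criterion, the subtlety you flagged, the paper combines Proposition~3.1 of Chapter~VII of \cite{NFBK10} with the proposition in \cite{NF74}.
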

	
	\begin{proof}
		By Proposition 3.1 in Chapter VII of \cite{NFBK10} and the proposition in \cite{NF74}, the factorization \( \Theta(z) = \Theta_2(z)\Theta_1(z) \) defined in \eqref{main_fact} is $2$-regular if and only if
		\[
		\Delta_2(t)(\mathcal{D}_{B} \oplus \mathcal{D}_{L^*}) \cap \Delta_{*1}(t)(\mathcal{D}_{B} \oplus \mathcal{D}_{L^*}) = \{0\} \quad \text{for a.e.\ } t\in[0,2\pi].
		\]
		We compute:
		\begin{align*}
			\Delta_2(t)&(\mathcal{D}_{B} \oplus \mathcal{D}_{L^*}) \cap \Delta_{*1}(t)(\mathcal{D}_{B} \oplus \mathcal{D}_{L^*}) \\
			=&\left\{(I - \Theta_2^*(e^{it})\Theta_2(e^{it}))^{1/2}(\mathcal{D}_{B} \oplus \mathcal{D}_{L^*})\right\}\cap \left\{(I - \Theta_1(e^{it})\Theta_1^*(e^{it}))^{1/2}(\mathcal{D}_{B} \oplus \mathcal{D}_{L^*})\right\} \\
			=&\left\{
			\begin{bmatrix}
				\Delta_{B}(t) & 0\\
				0 & 0
			\end{bmatrix}
			\begin{bmatrix}
				\mathcal{D}_{B} \\
				\mathcal{D}_{L^*}
			\end{bmatrix}
			\right\}
			\cap
			\left\{
			\begin{bmatrix} 
				L^* & D_{L} \\
				D_{L^*} & -L
			\end{bmatrix}
			\begin{bmatrix}
				\Delta_{*A}(t) & 0\\
				0 & 0
			\end{bmatrix}
			\begin{bmatrix} 
				L & D_{L^*} \\
				D_{L} & -L^*
			\end{bmatrix}
			\begin{bmatrix}
				\mathcal{D}_{B} \\
				\mathcal{D}_{L^*}
			\end{bmatrix}
			\right\} \\
			=&\left\{
			\begin{bmatrix}
				\Delta_{B}(t) & 0\\
				0 & 0
			\end{bmatrix}
			\begin{bmatrix}
				\mathcal{D}_{B} \\
				\mathcal{D}_{L^*}
			\end{bmatrix}
			\right\}
			\cap
			\left\{
			\begin{bmatrix} 
				L^* & D_{L} \\
				D_{L^*} & -L
			\end{bmatrix}
			\begin{bmatrix}
				\Delta_{*A}(t) & 0\\
				0 & 0
			\end{bmatrix}
			\begin{bmatrix}
				\mathcal{D}_{A^*} \\
				\mathcal{D}_{L}
			\end{bmatrix}
			\right\} \\
			=& \left\{ \Delta_{B}(t)\mathcal{D}_{B} \oplus \{0\} \right\}
			\cap
			\left\{
			\begin{bmatrix} 
				L^* & D_{L} \\
				D_{L^*} & -L
			\end{bmatrix}
			\begin{bmatrix}
				\Delta_{*A}(t)\mathcal{D}_{A^*} \\
				\{0\}
			\end{bmatrix}
			\right\} \\
			=& \left\{ \Delta_{B}(t)\mathcal{D}_{B} \oplus \{0\} \right\}
			\cap
			\left\{ L^*x \oplus D_{L^*}x : x \in \Delta_{*A}(t)\mathcal{D}_{A^*} \right\} \\
			=& \left\{ \Delta_{B}(t)\mathcal{D}_{B} \cap L^*\left[ \Delta_{*A}(t)\mathcal{D}_{A^*} \cap \ker(D_{L^*}) \right] \right\} \oplus \{0\}.
		\end{align*}
		This equivalence, combined with Remark~\ref{remark_1}, yields the desired conclusion.
	\end{proof}

	\begin{theorem}\label{several_classes}
		Let \( T =
		\begin{bmatrix}
			A & X \\
			0 & B
		\end{bmatrix} \)
		be a contraction on \(\mathcal{H}_1 \oplus \mathcal{H}_2\). Then, in the following cases, the factorization in equation~{\rm\eqref{fact_I}} of the characteristic function \(\Theta_T\) is a $3$-regular factorization
		\begin{enumerate}[\rm (i)]
			\item If the operator \(A^*\) or \(B\) is a pure contraction.
			\item If the operator \(T\) is a pure contraction.
			\item If the coupling operator \(L\) is a proper contraction, i.e., \(\|L^*x\| < \|x\|\) for all nonzero \(x \in \mathcal{D}_{A^*}\), or \(\|Ly\| < \|y\|\) for all nonzero \(y \in \mathcal{D}_{B}\).
		\end{enumerate}
	\end{theorem}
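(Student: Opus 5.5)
The plan is to reduce everything to the intersection criterion of Lemma~\ref{lemma_main}. The factorization \eqref{fact_I} is $3$-regular as soon as, for a.e.\ $t\in[0,2\pi]$,
\[
\Delta_{B}(t)\mathcal{D}_{B} \cap L^*\left[\Delta_{*A}(t)\mathcal{D}_{A^*} \cap \ker(D_{L^*})\right] = \{0\}.
\]
In each case I will show that one of the three ingredients is trivial: $\Delta_B(t)=0$, or $\Delta_{*A}(t)=0$, or the set $L^*[\ker D_{L^*}]$ is trivial. The proof of Lemma~\ref{lemma_main} only uses Proposition~3.1 of Chapter~VII of \cite{NFBK10} and the explicit block computation, not the c.n.u.\ hypothesis, so I will apply it to an arbitrary contraction $T$ of the given form.

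\emph{Case (i).} Suppose $B$ is pure, i.e.\ $B^{*n}\to 0$ strongly. Then $B\in C_{\cdot 0}$ and its characteristic function $\Theta_B$ is inner (Chapter~VI of \cite{NFBK10}). A pure contraction has no unitary part, so the inner-ness criterion quoted in Section~2 applies. Hence $\Theta_B(e^{it})$ is isometric a.e., which gives $\Delta_B(t)=0$, and the intersection is $\{0\}$.

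Suppose instead $A^*$ is pure. Then $A^*\in C_{\cdot 0}$, so $\Theta_{A^*}$ is inner. Since $\Theta_{A^*}(z)=\Theta_A(\bar z)^*$, the boundary values $\Theta_A(e^{it})$ are co-isometric a.e. Therefore $\Delta_{*A}(t)=(I-\Theta_A\Theta_A^*)^{1/2}=0$, and again the intersection vanishes.

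\emph{Case (ii).} Since $\mathcal{H}_2$ is invariant under $T^*$, we have $B^{*n}=P_{\mathcal{H}_2}T^{*n}|_{\mathcal{H}_2}=T^{*n}|_{\mathcal{H}_2}$. So $T$ pure implies $B$ pure, exactly as in the proof of Proposition~\ref{prop:pure-block}, and case (i) applies.

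\emph{Case (iii).} First suppose $\|L^*x\|<\|x\|$ for all nonzero $x\in\mathcal{D}_{A^*}$. If $D_{L^*}x=0$, then $\|x\|^2-\|L^*x\|^2=\|D_{L^*}x\|^2=0$, so $x=0$. Thus $\ker D_{L^*}=\{0\}$ and the intersection is trivial.

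Now suppose instead $\|Ly\|<\|y\|$ for all nonzero $y\in\mathcal{D}_B$; the same computation gives $\ker D_L=\{0\}$. Take $x\in\ker D_{L^*}$. Then $LL^*x=x$, and
\[
D_L^2L^*x=L^*x-L^*LL^*x=L^*x-L^*x=0,
\]
so $L^*x\in\ker D_L=\{0\}$. Hence $x=LL^*x=0$, so $L^*[\ker D_{L^*}]=\{0\}$ and the intersection is $\{0\}$.

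The only delicate point is in case (i): justifying that purity ($T^{*n}\to0$) gives inner boundary values even without assuming c.n.u., and translating purity of $A^*$ into co-inner-ness of $\Theta_A$. I would handle this by citing the $C_{\cdot0}$ characterization in Chapter~VI of \cite{NFBK10} together with the relation $\Theta_{A^*}(z)=\Theta_A(\bar z)^*$. Everything else is the direct algebra above.
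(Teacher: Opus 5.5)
Your proposal is correct and follows the paper's proof essentially step for step. You reduce each case to the intersection criterion of Lemma~\ref{lemma_main}: in (i), innerness of $\Theta_B$ or of $\Theta_{A^*}$ gives $\Delta_B(t)=0$ or $\Delta_{*A}(t)=0$; in (ii), purity of $B$ is inherited from $T$; and in (iii), $\ker D_{L^*}=\{0\}$. In the subcase $\|Ly\|<\|y\|$ of (iii), your argument via $L^*\ker D_{L^*}\subseteq\ker D_L$ and $x=LL^*x$ usefully justifies $\ker D_{L^*}=\{0\}$, which the paper simply asserts.
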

	
	\begin{proof}
		
		We establish the assertions for each case as follows:
		
		\noindent(i) If \(A^*\) is a pure contraction, then \(\Theta_{A^*}(e^{it})\) is an isometry a.e.\  $t\in[0,2\pi].$ 
			Since \(\Theta_{A^*}(e^{it}) = \Theta_{A}(e^{-it})^*\), it follows that \(\Theta_{A}(e^{it})^*\) is also an isometry a.e.\  $t\in[0,2\pi].$ Consequently, \(\Delta_{*1}(t) = 0\) a.e.\  $t\in[0,2\pi].$
			By Lemma~\ref{lemma_main}, this ensures that the factorization of the characteristic function \(\Theta_T\) given in equation~{\rm\eqref{fact_I}} is $3$-regular. 
			Similarly, if \(B\) is a pure contraction, then \(\Theta_{B}\) is an inner function, which implies that \(\Delta_{B}(t)\mathcal{D}_{B} = \{0\}\) a.e.\  $t\in[0,2\pi].$
			Consequently, invoking Lemma~\ref{lemma_main} again, the factorization of the characteristic function \(\Theta_T\) in equation~{\rm\eqref{fact_I}} is $3$-regular.
			
			\noindent(ii) If \(T\) is a pure contraction, then \(B\) must necessarily be a pure contraction. 
			Hence, the desired result follows directly from Case~(i).
			
			\noindent(iii) In this case, $\ker(D_{L^*})=\{0\}$. By Lemma~\ref{lemma_main}, the result follows immediately. \qedhere
	\end{proof}
	
	\begin{remark}
		The case in Theorem~\ref{several_classes} where $T$ is pure may alternatively be established via Propositions~\ref{pure_prop} and \ref{theorem_either}.
	\end{remark}
	
	\section{Counter Examples}
	
	Although the factorization of the characteristic function of a contraction in block upper triangular form given in \eqref{fact_I} is $3$-regular for several classes of contractions, as discussed in the previous section, this property does not hold universally. The following counterexample demonstrates a case where this factorization fails to be $3$-regular.

	\begin{example}\label{exp:1}
		Let \(T: H^2(\mathbb{D}) \oplus H^2(\mathbb{D}) \to H^2(\mathbb{D}) \oplus H^2(\mathbb{D})\) be the contraction defined by
		\[
		T := \begin{bmatrix} S & D_{S^*} L D_{S^*} \\ 0 & S^* \end{bmatrix},
		\]
		where \(S\) denotes the unilateral shift on the Hardy space \(H^2(\mathbb{D})\) and \(L = I_{\mathcal{D}_{S^*}}\). One observes that \(D_S = 0\) and \(D_{S^*} = P_{\mathbb{C}}\). Consequently, the characteristic function of \(S\) is given by  
		\[
		\Theta_S : \mathbb{D} \to B(\{0\}, \mathbb{C}), \quad \Theta_S(z) = 0_{\{0\}\to \mathbb{C}} \quad \text{for all } z \in \mathbb{D}.
		\]
		Similarly, the characteristic function of \(S^*\) takes the form  
		\[
		\Theta_{S^*} : \mathbb{D} \to B(\mathbb{C}, \{0\}), \quad \Theta_{S^*}(z) = 0_{\mathbb{C}\to\{0\}} \quad \text{for all } z \in \mathbb{D},
		\]
		where \(0_{\mathcal{E} \to \mathcal{F}}\) denotes the zero operator from \(\mathcal{E}\) to \(\mathcal{F}\). To apply Lemma~\ref{lemma_main}, observe that $A=S$, $B=S^*$, and $L=I_{\mathcal{D}_{S^*}}$. Evaluating the relevant intersection, we obtain
		\begin{align*}
			\Delta_{B}(t)&\mathcal{D}_{B} \cap L^*\left[\Delta_{*A}(t)\mathcal{D}_{A^*} \cap \ker(D_{L^*})\right]\\
			& =I_{\mathbb{C}}(\mathbb{C})\cap I_{\mathbb{C}}[I_{\mathbb{C}}(\mathbb{C})\cap\mathbb{C}]\\
			&=\mathbb{C}\neq \{0\} \quad \text{for a.e. } t\in[0,2\pi].
		\end{align*}
		Consequently, Lemma~\ref{lemma_main} implies that the factorization \eqref{fact_I} is \emph{not} $3$-regular.
	\end{example}
	
	The operator $T$ in Example~\ref{exp:1} is unitary. To show that $3$-regularity can also fail for non-unitary contractions, we construct the following example.

	\begin{example}\label{exp:2}
		Let $T : \displaystyle\bigoplus_{i=1}^{4} H^2(\mathbb{D}) \to \bigoplus_{i=1}^{4} H^2(\mathbb{D})$ be the contraction defined by
		\[
		T := \begin{bmatrix}
			S^* \oplus S & D_{S \oplus S^*} L D_{S \oplus S^*} \\
			0 & S \oplus S^*
		\end{bmatrix},
		\]
		where $S$ is the shift operator on the Hardy space $H^2(\mathbb{D})$, and $L = I_{\mathcal{D}_{S \oplus S^*}}$. A direct computation of the defect operators yields
		\[
		D_{S^* \oplus S} = P_{\mathbb{C}} \oplus 0, \quad D_{S \oplus S^*} = 0 \oplus P_{\mathbb{C}}.
		\]
		Thus, the operator $T$ takes the form
		\[
		T = \begin{bmatrix}
			S^* \oplus S & 0 \oplus P_{\mathbb{C}} \\
			0\oplus 0 & S \oplus S^*
		\end{bmatrix}.
		\]
		Evaluating $T T^*$ and $T^* T$, we obtain:
		\begin{align*}
			T T^* &= 
			\begin{bmatrix}
				S^* \oplus S & 0 \oplus P_{\mathbb{C}} \\
				0\oplus 0 & S \oplus S^*
			\end{bmatrix}
			\begin{bmatrix}
				S \oplus S^* & 0\oplus 0 \\
				0 \oplus P_{\mathbb{C}} & S^* \oplus S
			\end{bmatrix} 
			= 
			\begin{bmatrix}
				I \oplus I & 0\oplus 0\\
				0\oplus 0 & S^* S \oplus I
			\end{bmatrix}, \\
			T^* T &= 
			\begin{bmatrix}
				S \oplus S^* & 0\oplus 0 \\
				0 \oplus P_{\mathbb{C}} & S^* \oplus S
			\end{bmatrix}
			\begin{bmatrix}
				S^* \oplus S & 0 \oplus P_{\mathbb{C}} \\
				0\oplus 0 & S \oplus S^*
			\end{bmatrix} 
			= 
			\begin{bmatrix}
				S^* S \oplus I & 0\oplus 0 \\
				0\oplus 0 & I \oplus I
			\end{bmatrix}.
		\end{align*}
		Consequently, $T$ is not a unitary operator, despite its restriction to the subspace $\{0\} \oplus H^2(\mathbb{D}) \oplus \{0\} \oplus H^2(\mathbb{D})$ being unitary.
		The contractive analytic functions
		$\{\mathbb{C}\oplus \{0\},\{0\}\oplus\mathbb{C},0\} $ and $\{\{0\}\oplus\mathbb{C},\mathbb{C}\oplus \{0\},0\} $ represent the characteristic functions of $S^*\oplus S$ and $S\oplus S^*$, respectively.
		
		Applying Lemma~\ref{lemma_main} and noting that  \(A=S^*\oplus S\), \(B=S\oplus S^*\), and \(L=I_{\mathcal{D}_{S \oplus S^*}}\), we obtain
		\begin{align*}
			\Delta_{*A}(t)=\Delta_{B}(t)=I_{\{0\}\oplus\mathbb{C}}, \quad \text{for a.e. } t\in[0,2\pi].
		\end{align*}
		Consequently, evaluating the relevant intersection yields
		\begin{align*}
			\Delta_{B}(t)&\mathcal{D}_{B} \cap L^*\left[\Delta_{*A}(t)\mathcal{D}_{A^*} \cap \ker(D_{L^*})\right]\\
			& =(\{0\}\oplus\mathbb{C})\cap (\{0\}\oplus\mathbb{C})\\
			&=\{0\}\oplus\mathbb{C}\neq \{0\} \quad \text{for a.e. } t\in[0,2\pi].
		\end{align*}
		Therefore, Lemma~\ref{lemma_main} implies that the factorization of the characteristic function given in \eqref{fact_I} is not $3$-regular.
	\end{example}
	Finally, we construct another example utilizing the functional model operator associated with the contractive analytic function $\{\mathbb{C}, \mathbb{C}, -\tfrac{1}{\sqrt{2}}\}$ considered in \cite{F73}, with a direct sum of unilateral shift operators.

	%%%%%%%%%%%%%%
	
	\begin{example} 
		Let \(\Theta\) denote the contractive analytic function given by \(\{\mathbb{C}, \mathbb{C}, -\frac{1}{\sqrt{2}}\}\). Let \(T_\Theta\) be the corresponding model operator, defined on the model space
		\[
		\mathcal{H}_{\Theta} := H^2(\mathbb{D}) \oplus L^2(\mathbb{C}) \ominus \left\{ -\frac{1}{\sqrt{2}} w \oplus \frac{1}{\sqrt{2}} w \mid w \in H^2(\mathbb{D}) \right\},
		\]
		by its action on the elements
		\[
		T_{\Theta}(e_n) = \begin{cases}
			e_{n+1} & \text{if } n \neq -1, \\
			\frac{1}{\sqrt{2}} e_0 & \text{if } n = -1,
		\end{cases}
		\]
		where the sequence
		\[
		e_n = \begin{cases}
			0 \oplus e^{int} & \text{if } n < 0, \\
			\frac{1}{\sqrt{2}} e^{int} \oplus \frac{1}{\sqrt{2}} e^{int} & \text{if } n \geq 0,
		\end{cases}
		\]
		constitutes an orthonormal basis for \(\mathcal{H}_\Theta\). Define the contraction \(T\) on the Hilbert space \(\bigoplus_{i=0}^{\infty} H^2(\mathbb{D}) \oplus \mathcal{H}_\Theta\) by the block matrix
		\[
		T = \begin{bmatrix}
			A & X \\
			0 & B
		\end{bmatrix},
		\]
		where \(A := \bigoplus_{i=0}^{\infty} S\). For this operator, we have
		\[
		\Theta_{A}(z) = 0, \quad D_{A} = 0, \quad D_{A^*} = \bigoplus_{i=0}^{\infty} P_{\mathbb{C}}, \quad \mathcal{D}_{A} = \{0\}, \quad \mathcal{D}_{A^*} = \bigoplus_{i=0}^{\infty} \mathbb{C},
		\]
		where \(P_{\mathbb{C}}\) denotes the orthogonal projection onto the subspace of constant functions in \(H^2(\mathbb{D})\). Furthermore, we set \(B := T_\Theta\), which yields
		\[
		\Theta_{B}(z) = -\frac{1}{2}, \quad D_{B} = \text{diag}(\cdots, 0, \frac{1}{\sqrt{2}}, \underline{0}, 0, \cdots), \quad D_{B^*} = \text{diag}(\cdots, 0, \underline{\frac{1}{\sqrt{2}}}, 0, 0, \cdots),
		\]
		and
		\[
		\mathcal{D}_{B} = \cdots \oplus \{0\} \oplus \mathbb{C} \oplus \underline{\{0\}} \oplus 0 \oplus \cdots, \quad \mathcal{D}_{B^*} = \cdots \oplus \{0\} \oplus \underline{\mathbb{C}} \oplus \{0\} \oplus 0 \oplus \cdots.
		\]
		Next, define \(X = D_{A^*} L D_{B}\), where the contraction \(L: \mathcal{D}_{B} \to \mathcal{D}_{A^*}\) is specified by
		\[
		L(\cdots, 0, x, \underline{0}, \cdots) := (x, 0, 0, \cdots).
		\]
		A direct calculation reveals that
		\[
		\Delta^2_{*A}(t) = I_{\mathcal{D}_{A^*}} - \Theta_{A}(e^{it}) \Theta_{A}^*(e^{it}) = I_{\mathcal{D}_{A^*}}, \quad \Delta_{*A}(t) \mathcal{D}_{A^*} = \mathcal{D}_{A^*},
		\]
		and
		\[
		\Delta^2_{B}(t) = I_{\mathcal{D}_{B}} - \Theta_{B}^*(e^{it}) \Theta_{B}(e^{it}) = \frac{1}{2} I_{\mathcal{D}_{B}}, \quad \Delta_{B}(t) \mathcal{D}_{B} = \mathcal{D}_{B}.
		\]	
		Thus, the corresponding intersection is
		\begin{align*}
			\Delta_{B}(t)&\mathcal{D}_{B} \cap L^*\left[\Delta_{*A}(t)\mathcal{D}_{A^*} \cap \ker(D_{L^*})\right]\\
			&= \mathcal{D}_{B} \cap L^*(\ker(D_{L^*}))\\
			& =\mathcal{D}_{B} \cap L^* (L(\mathcal{D}_{B}))\\
			&=\mathcal{D}_{B}\neq \{0\} \quad \text{for a.e. } t\in[0,2\pi].
		\end{align*}

		Hence, Lemma~\ref{lemma_main} implies that the factorization of the characteristic function given in \eqref{fact_I} is \emph{not} $3$-regular. Furthermore, one can verify that \(T\) is an isometry; indeed, its defect space $\mathcal{D}_T$ is unitarily equivalent to $\mathcal{D}_A\oplus\mathcal{D}_L=\{0\}$. Since every c.n.u.\ isometry is a pure operator, and the factorization \eqref{fact_I} is always $3$-regular for pure operators, it follows that \(T\) is \emph{not} a c.n.u.\ contraction.
	\end{example}
	In summary, the three counterexamples above demonstrate that the factorization \eqref{fact_I} is not $3$-regular in general. Notably, in each instance, the underlying contraction fails to be c.n.u.\ contraction. Consequently, whether this factorization is necessarily $3$-regular for an arbitrary c.n.u.\ contraction remains an open question.

	\section{Regularity of the Factorization of Characteristic Functions of Row Contractions in Block Upper Triangular Form}

	Let $\mathbb{N}$ and $\mathbb{C}$ denote the sets of natural and complex numbers, respectively. For $n \in \mathbb{N}$, let $\mathbb{C}^n$ denote the standard $n$-dimensional complex Hilbert space, equipped with the canonical orthonormal basis $\{e_1, \ldots, e_n\}$. In the infinite-dimensional case where $n = \infty$, the notation $\mathbb{C}^n$ is conventionally understood to represent the separable Hilbert space $\ell^2(\mathbb{N})$ of all square-summable complex sequences.
	\begin{definition}
		Let \(\mathcal{H}\) be a Hilbert space, and let \(T_1, \dots, T_n \in B(\mathcal{H})\). The \(n\)-tuple \(T := [T_1, \dots, T_n]\), viewed as a bounded linear operator from \(\bigoplus_{i=1}^n \mathcal{H}\) to \(\mathcal{H}\), is termed a \emph{row contraction} if it satisfies the operator inequality
		\[
		\sum_{i=1}^n T_i T_i^* \leq I_{\mathcal{H}},
		\]
		where \(I_{\mathcal{H}}\) denotes the identity operator on \(\mathcal{H}\). For such a row contraction \(T\), the associated defect operators are defined as
		\[
		D_T := (I - T^*T)^{1/2} \in B\left(\bigoplus_{i=1}^n \mathcal{H} \right), \quad \text{and} \quad D_{T^*} := (I - TT^*)^{1/2} \in B(\mathcal{H}).
		\]
		The corresponding defect spaces are defined as the closures of the ranges of these operators, namely
		\[
		\mathcal{D}_T := \overline{\im(D_T)}, \quad \mathcal{D}_{T^*} := \overline{\im(D_{T^*})},
		\]
		and their respective dimensions are referred to as the \emph{defect indices} of \(T\).
	\end{definition}

	Let $\Gamma(\mathbb{C}^n)$ denote the \emph{full Fock space} over $\mathbb{C}^n$, defined by
	
	\[
	\Gamma(\mathbb{C}^n) := \bigoplus_{k \geq 0} (\mathbb{C}^n)^{\otimes k} = \mathbb{C} \oplus \mathbb{C}^n \oplus (\mathbb{C}^n \otimes \mathbb{C}^n) \oplus  \cdots\oplus (\mathbb{C}^n)^{\otimes^k} \oplus\cdots.
	\]
	For notational convenience, we write $\Gamma$ in place of $\Gamma(\mathbb{C}^n)$. The \textit{vacuum vector} $1 \oplus 0 \oplus \cdots \in \Gamma$ is denoted by $e_\emptyset$. Let $\mathbb{F}_n^+$ denote the unital free semigroup on $n$ generators $1, \dots, n$, with the empty word $\emptyset$ serving as its identity element. Given a word $\alpha = \alpha_1 \cdots \alpha_j \in \mathbb{F}_n^+$, we denote the elementary tensor $e_{\alpha_1} \otimes \cdots \otimes e_{\alpha_j}$ by $e_\alpha$. The set $\{e_\alpha : \alpha \in \mathbb{F}_n^+\}$ then constitutes an orthonormal basis for $\Gamma$. The length of a word $\alpha \in \mathbb{F}_n^+$ is defined as $|\alpha| = j$ if $\alpha = \alpha_1 \dots \alpha_j$, and $|\alpha| = 0$ if $\alpha=\emptyset$. 	Let $T = [T_1, \cdots, T_n]$ be a row contraction on a Hilbert space $\mathcal{H}$. For any word $\alpha = \alpha_1 \cdots \alpha_j \in \mathbb{F}_n^+$, we define the product operator $T_\alpha := T_{\alpha_1} \cdots T_{\alpha_j}$, with the convention that $T_\emptyset := I_{\mathcal{H}}$. Furthermore, for each $i = 1, \dots, n$, the \emph{left creation operator} $S_i : \Gamma \to \Gamma$ is defined by
	\[
	S_i(x) := e_i \otimes x \quad \text{for all } x \in \Gamma.
	\]
	Similarly, the \emph{right creation operator} $R_i: \Gamma \to \Gamma$ is defined by
	\[
	R_i(x) := x \otimes e_i \quad \text{for all }  x \in \Gamma.
	\]
	
	\begin{definition}
		Let \(\mathcal{H}\) be a Hilbert space and consider a row contraction \(T = [T_1, \ldots, T_n]: \bigoplus_{i=1}^n \mathcal{H} \to \mathcal{H}\). Two fundamental subclasses of row contractions are defined as follows:
		\begin{itemize}
			\item \emph{Completely non-coisometric}: Let \(\mathcal{H}_c\) denote the subspace of \(\mathcal{H}\) given by
			\[
			\mathcal{H}_c := \left\{ h \in \mathcal{H} : \sum_{|\alpha| = k} \|T_{\alpha}^* h\|^2 = \|h\|^2 \text{ for all } k \geq 1 \right\}.
			\]
			The row contraction \(T\) is said to be \emph{completely non-coisometric} (c.n.c.)\ provided that \(\mathcal{H}_c = \{0\}\).
			
			\item \emph{Pure}: The row contraction \(T\) is termed \emph{pure} if, for every vector \(h \in \mathcal{H}\), the condition
			\[
			\lim_{k \to \infty} \sum_{|\alpha| = k} \|T_{\alpha}^* h\|^2 = 0
			\]
			is satisfied.
		\end{itemize}
	\end{definition}
	
	Let $\mathcal{H}$ and $\mathcal{K}$ be Hilbert spaces, and let $\Gamma$ denote the full Fock space over $\mathbb C^n$. A bounded linear operator $M :\Gamma\otimes\mathcal{H}\to\Gamma\otimes\mathcal{K}$ is said to be a \emph{multi-analytic operator} if it intertwines the left creation operators, that is,
	$$M(S_i\otimes I_{\mathcal{H}})=(S_i\otimes I_\mathcal{K})M\quad \text{for } i=1,\dots,n.$$
	
	Let $N : \Gamma\otimes\mathcal{H}'\to\Gamma\otimes\mathcal{K}'$ be another multi-analytic operator. The operator $M$ is said to \emph{coincide} with $N$ if there exist unitary operators $U\in B(\mathcal{H},\mathcal{H}')$ and $V\in B(\mathcal{K},\mathcal{K}')$ such that
	$$N(I_\Gamma\otimes U)=(I_\Gamma \otimes V)M.$$
	
	For a given multi-analytic operator \( M: \Gamma \otimes \mathcal{H} \to \Gamma \otimes \mathcal{K} \), define the operator $\theta : \mathcal{H}\to \Gamma\otimes\mathcal{K}$ by
	\[
	\theta h=M(e_\emptyset\otimes h), \quad h\in\mathcal{H}.
	\]
	It is well known that the operator \( M \) is uniquely determined by \( \theta \). The operator $\theta$ is called the \emph{symbol} of \( M \), and we denote the corresponding multi-analytic operator by \( M_\theta \).

	A multi-analytic operator $M_\theta : \Gamma \otimes \mathcal{H} \to \Gamma \otimes \mathcal{K}$ is said to be
	\begin{enumerate}[\rm (i)]
		\item \emph{inner} if $M_\theta$ is an isometry;
		\item \emph{outer} if its range is dense, that is,
		\[
		\overline{M_\theta(\Gamma \otimes \mathcal{H})} = \Gamma \otimes \mathcal{K};
		\]
		\item a \emph{unitary constant} if $M_\theta$ takes the form $M_\theta = I_\Gamma \otimes W$, where $W\in B(\mathcal{H}, \mathcal{K})$ is a unitary operator;
		\item \emph{purely contractive} if, for every nonzero $h \in \mathcal{H}$, the following strict inequality holds:
		\[
		\| P_{\mathcal{K}} \theta h \| < \| h \|,
		\]
		where $P_{\mathcal{K}}$ denotes the orthogonal projection from $\Gamma \otimes \mathcal{K}$ onto the subspace $e_{\emptyset} \otimes \mathcal{K}$.
	\end{enumerate}

	The notion of a characteristic function was extended to the setting of row contractions by G.~Popescu in \cite{Po89b}; for a detailed exposition, see \cite{Po89a,Po95,Po06}. For a row contraction $T$ on a Hilbert space $\mathcal{H}$, consider the multi-analytic operator $\Theta_{T} : \Gamma\otimes\mathcal{D}_{T} \to \Gamma\otimes\mathcal{D}_{T^*}$ induced by the symbol $\theta_{T}$, which is explicitly defined for $h \in \mathcal{D}_{T}$ by the formula
	\[
	\theta_{T}(h) \coloneq -\displaystyle\sum_{i=1}^n T_iP_ih + \displaystyle\sum_{i=1}^n  (S_i\otimes I_{\mathcal{D}_{T^*}}) \left(\displaystyle\sum_{\alpha\in \mathbb{F}_n^+} e_{\alpha}\otimes D_{T^*}T^*_{\alpha}P_i D_{T}h\right).
	\]
	Here, $P_i$ denotes the canonical orthogonal projection from $\bigoplus_{i=1}^{n}\mathcal{H}$ onto its $i$-th coordinate space, and $S_i$ is the $i^{\text{th}}$ left creation operator acting on the full Fock space $\Gamma$. The operator $\Theta_T$ is termed the \emph{characteristic function} of the row contraction $T$. Moreover, it is established in Remark~3.2 of \cite{Po89b} that if $T$ is a pure row contraction, the corresponding characteristic function $\Theta_T$ is necessarily inner.
	
	%\section{Regular Factorization of Characteristic Function of Row Contractions}
	Let $\mathcal{E}$ and $\mathcal{E_*}$ be Hilbert spaces, and let $\Theta : \Gamma \otimes \mathcal{E} \to \Gamma \otimes \mathcal{E_*}$ be a contractive multi-analytic operator. Assume that $\Theta$ admits the following factorization:
	\[
	\Theta = \Theta_k\cdots\Theta_1,
	\]
	where,  for each $i=1,\dots,k$, $\Theta_i : \Gamma \otimes \mathcal{E}_{i} \to \Gamma \otimes \mathcal{E}_{i+1}$ is a contractive multi-analytic function with $\mathcal{E}=\mathcal{E}_1$ and $\mathcal{E}_*=\mathcal{E}_{k+1}$. Furthermore, define the defect operators as follows:
	\[
	\Delta_\Theta := (I - \Theta^* \Theta)^{1/2}, \quad
	\Delta_i := (I - \Theta_i^* \Theta_i)^{1/2}, \quad
	\Delta_{*i} := (I - \Theta_i \Theta_i^*)^{1/2}, \quad \text{for } i = 1, \dots,k.
	\]
	
	Following the definition established in \cite{HM26a}, a given factorization $\Theta = \Theta_k\cdots\Theta_1$ is said to be \emph{$k$-regular} if its associated linear isometry
	\[
	Z_k : \overline{{\Delta(\Gamma\otimes\mathcal{E})} }\to \overline{{\Delta_k (\Gamma\otimes\mathcal{E}_k)}} \oplus\cdots\oplus \overline{\Delta_1 (\Gamma\otimes\mathcal{E}_1)}
	\]
	defined by the relation
	\begin{equation}\label{k_regular_theta}
		Z(\Delta_\Theta f) := \Delta_k \Theta_{k-1}\cdots\Theta_1 f \oplus\cdots\oplus\Delta_1 f \quad \text{ for all } f \in \Gamma \otimes \mathcal{E},
	\end{equation}
	is a unitary operator. Equivalently,  the following  identity holds
	\[
	\overline{\{ \Delta_k \Theta_{k-1}\cdots\Theta_1 f \oplus\cdots\oplus\Delta_1 f : f \in \Gamma \otimes \mathcal{E} \}} =  \overline{{\Delta_k (\Gamma\otimes\mathcal{E}_k)}} \oplus\cdots\oplus \overline{\Delta_1 (\Gamma\otimes\mathcal{E}_1)}.
	\]
	In \cite{Po06}, G. Popescu established a one-to-one correspondence between the joint invariant subspaces of a c.n.c.\ row contraction and the regular factorizations of its characteristic function. As a natural extension of this foundational result, the authors in \cite{HM26b} demonstrated an analogous one-to-one correspondence between the \(k\)-regular factorizations of the characteristic function of a c.n.c.\ row contraction and its chain of joint invariant subspaces.

	The following theorem extends the factorization theorem of Sz.-Nagy and Foia\c{s} \cite{NF67} for the characteristic function of a contraction in block upper triangular form to the setting of row contractions in block upper triangular form.

	\begin{theorem}{\rm(Theorem 3.2, \cite{HMS17})}
		Let $\mathcal H_1$ and $\mathcal H_2$ be Hilbert spaces, and
		\[T =\begin{bmatrix}
			A& D_{A^*}L D_B\\
			0 & B
		\end{bmatrix}:(\bigoplus_{1}^n \mathcal H_1) \oplus(\bigoplus_{1}^n\mathcal H_2)\to \mathcal H_1 \oplus \mathcal H_2,\]
		be a row contraction on $\mathcal H_1 \oplus \mathcal H_2$, where $A = (A_1,
		\cdots, A_n)$ on $\mathcal H_1$ and $B = (B_1, \cdots, B_n)$ on $\mathcal H_2$
		are row contractions, and $L\in B(\mathcal D_B, \mathcal D_{A^*})$ is a
		contraction. Then
		\[\Theta_T =(I_\Gamma \otimes \sigma_*^{-1})
		\begin{bmatrix}
			\Theta_B & 0 \\
			0 & I_{\Gamma \otimes  \mathcal D_{ L^*}}
		\end{bmatrix}
		(I_\Gamma \otimes J_L)
		\begin{bmatrix}
			\Theta_A & 0\\ 0& I_{\Gamma \otimes \mathcal D_{ L}}\end{bmatrix}
		(I_\Gamma \otimes \sigma),\] 
		where $\sigma\in B(\mathcal D_T, \mathcal D_A \oplus \mathcal D_L)$ and $\sigma_*
		\in B(\mathcal D_{T^*}, \mathcal D_{B^*} \oplus \mathcal D_{L^*})$ are unitary
		operators, and $J_L=\begin{bmatrix} L^* & D_{L}
			\\ D_{L^*} & - L
		\end{bmatrix}$ is the Julia-Halmos matrix corresponding to $L$.
	\end{theorem}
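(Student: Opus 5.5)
The plan is to reduce the row-contraction factorization to the single-variable computation already carried out for contractions, working directly with the multi-analytic operators. Write $\Theta := (I_\Gamma\otimes\sigma_*)\,\Theta_T\,(I_\Gamma\otimes\sigma^{-1})$. Conjugating by the unitary constants $I_\Gamma\otimes\sigma$ and $I_\Gamma\otimes\sigma_*^{-1}$ does not affect regularity.

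\emph{Step 1: structure of the block operator.} First I will verify that $T$ is indeed a row contraction with the stated coupling. Then I will produce the unitaries $\sigma$ and $\sigma_*$ by identifying the defect spaces. Concretely:
\begin{itemize}
\item compute $I-T^*T$ and $I-TT^*$ in block form, using $D_{A^*}$, $D_B$ and the Julia--Halmos identities $L^*D_{L^*}=D_LL^*$ and $LD_L=D_{L^*}L$;
\item exhibit isometries $D_T h\mapsto D_A P_1 h\oplus D_L(\cdots)$ onto $\mathcal D_A\oplus\mathcal D_L$, and $D_{T^*}\mapsto\mathcal D_{B^*}\oplus\mathcal D_{L^*}$, exactly as in the Sz.-Nagy--Foia\c{s} construction.
\end{itemize}

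\emph{Step 2: symbol identity.} Next I will compute the symbol $\theta_T$ and compare it with the symbol of the right-hand side. Since a multi-analytic operator is determined by its symbol, it suffices to check equality on $e_\emptyset\otimes h$.

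\begin{itemize}
\item \emph{Right-hand side.} It equals $(\Theta_B\oplus I)\,J_L\,(\theta_A h_1\oplus h_2)$ with $h_1\oplus h_2=\sigma h$. Expanding $J_L$ requires applying $\Theta_B$ to $e_\emptyset\otimes L^*\theta_A$-type terms. Here I will use multi-analyticity, namely that $\Theta_B$ commutes with $S_i\otimes I$.
\item \emph{Left-hand side.} Expanding $\theta_T$ via $T_\alpha^*$ on $\mathcal H_1\oplus\mathcal H_2$ gives two kinds of coefficients. The $\mathcal H_1$-component is $A_\alpha^*$. The $\mathcal H_2$-component involves words mixing $B^*$ with the coupling $D_BL^*D_{A^*}$.
\end{itemize}

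I will organize the comparison by the length of words. Write $T_\alpha^*$ via the block lower-triangular adjoint. Sums over words then split as sums over a "switch point", and this splitting matches the composition $\Theta_B\circ J_L\circ\Theta_A$.

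\emph{Expected obstacle.} The main obstacle is Step 2's bookkeeping. One must verify that the cross term coming from $D_BL^*D_{A^*}$, summed over switch points, equals the action of $\Theta_B$ on $L^*$ applied to the Fock-space tail of $\theta_A$. My first attempt will be to check the coefficient of $e_\alpha$ for each fixed $\alpha$ by induction on $|\alpha|$, using the recursions
\[
\theta(e_\emptyset\otimes h)=\theta_0h+\sum_i(S_i\otimes I)\,(\cdots)
\]
satisfied by both characteristic functions. This reduces the identity to the $n$ single-step identities, which are finite-dimensional-style block computations as in \cite{NF67}.
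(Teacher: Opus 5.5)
The paper does not prove this statement; it is quoted from \cite{HMS17}. So there is no in-paper argument to compare with, and your outline has to stand on its own. Its architecture is sound: identify the defect spaces, then compare symbols on $e_\emptyset\otimes h$. However, the map you write down for $\sigma$ is wrong. For $g=g_1\oplus g_2$ with $g_1\in\bigoplus_1^n\mathcal H_1$ and $g_2\in\bigoplus_1^n\mathcal H_2$,
\[
\|D_Tg\|^2-\|D_Ag_1\|^2=\|D_Bg_2\|^2-\|D_{A^*}LD_Bg_2\|^2-2\,\mathrm{Re}\langle Ag_1,D_{A^*}LD_Bg_2\rangle .
\]
This becomes negative after rescaling $g_1$ as soon as $A^*LD_B\neq 0$, so no map of the form $D_Tg\mapsto D_Ag_1\oplus(\cdots)$ can be isometric. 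Factoring $I-T^*T$ and $I-TT^*$ with $A^*D_{A^*}=D_AA^*$ and $BD_B=D_{B^*}B$ gives the correct Sz.-Nagy--Foia\c{s} identifications
\[
\sigma D_Tg=(D_Ag_1-A^*LD_Bg_2)\oplus D_LD_Bg_2,\qquad \sigma_*D_{T^*}x=(D_{B^*}x_2-BL^*D_{A^*}x_1)\oplus D_{L^*}D_{A^*}x_1,
\]
for $x=x_1\oplus x_2\in\mathcal H_1\oplus\mathcal H_2$. Both maps are isometric with dense range, and no Julia--Halmos identity is needed at this stage.

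With these formulas the symbol comparison is a direct term-by-term match, and no induction on $|\alpha|$ is required.
\begin{enumerate}[(a)]
\item For $h\in\mathcal D_T$ with $\sigma h=h_1\oplus h_2$, the identity $D_T|_{\mathcal D_T}=(\sigma D_T)^*\sigma$ gives $D_Th=D_Ah_1\oplus D_B(D_Lh_2-L^*Ah_1)$.
\item Hence the $\mathcal H_1$-part of $T^*_\alpha P_iD_Th$ is $A^*_\alpha P_iD_Ah_1$. Its $\mathcal H_2$-part is $B^*_\alpha P_iD_B(D_Lh_2-L^*Ah_1)$ plus the switch-point terms $B^*_\gamma P_kD_BL^*D_{A^*}A^*_\beta P_iD_Ah_1$, summed over $\alpha=\beta k\gamma$.
\item After applying $\sigma_*D_{T^*}$, the $\mathcal D_{L^*}$-part is the $e_{i\alpha}$-coefficient of $(I_\Gamma\otimes D_{L^*})\theta_Ah_1$.
\item The $\mathcal D_{B^*}$-part has three kinds of terms:
\begin{itemize}
\item $D_{B^*}B^*_\alpha P_iD_B(D_Lh_2-L^*Ah_1)$, which is the $e_{i\alpha}$-coefficient of $\theta_B(D_Lh_2-L^*Ah_1)$;
\item the switch-point terms preceded by $D_{B^*}$, which are the $e_{i\alpha}$-coefficients of $(S_{i\beta}\otimes I)\theta_B(L^*D_{A^*}A^*_\beta P_iD_Ah_1)$;
\item $-BL^*D_{A^*}A^*_\alpha P_iD_Ah_1$, which is the constant coefficient of $\theta_B(L^*D_{A^*}A^*_\alpha P_iD_Ah_1)$ shifted to $e_{i\alpha}$.
\end{itemize}
\item The $e_\emptyset$-coefficient is checked on the dense set $h=D_Tg$, writing $Th=D_{T^*}Tg$. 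This is where $AD_A=D_{A^*}A$, $BD_B=D_{B^*}B$ and the Julia--Halmos relation $D_{L^*}L=LD_L$ are actually used.
\end{enumerate}
Your remark about regularity plays no role, since the statement is a pure identity.

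An alternative is the closed-form route: write $\Theta_T$ as a transfer function in the non-commuting variables $R_i\otimes I$ and invert the lower-triangular block resolvent through those of $A^*$ and $B^*$, as in the computation with $(I-zT^*)^{-1}$ in \cite{NF67}. Compared with that, your coefficient-wise route is longer but more elementary. Every coefficient is a finite expression, so you never need to justify the resolvent (its argument $\sum_iR_i\otimes T_i^*$ can have norm $1$) or track the order of non-commuting factors inside it. In both approaches the substantive input is the pair of defect-space identifications above, and that is the part of your outline that needs correcting.
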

	In this context, we introduce the following notation:
	\[
	\Theta'_3 = (I_\Gamma \otimes \sigma_*^{-1})
	\begin{bmatrix}
		\Theta_B & 0 \\
		0 & I_{\Gamma \otimes \mathcal{D}_{L^*}}
	\end{bmatrix}, \quad
	\Theta'_2= (I_\Gamma \otimes J_L),\quad
	\Theta'_1=
	\begin{bmatrix}
		\Theta_A & 0 \\
		0 & I_{\Gamma \otimes \mathcal{D}_L}
	\end{bmatrix}
	(I_\Gamma \otimes \sigma).
	\]
	The objective is to investigate whether the factorization 
	\begin{equation}
		\label{fact_II} \Theta =\Theta'_3 \Theta'_2 \Theta'_1
	\end{equation}
	is $3$-regular. 
	\begin{remark}\label{remark_2}
		According to the definition of $k$-regularity provided in \eqref{k_regular_theta}, the $3$-regularity of the factorization \eqref{fact_II} is equivalent to the $2$-regularity of the factorization $\Theta = \Theta_2 \Theta_1$, where
		\[
		\Theta_2 = 
		\begin{bmatrix}
			\Theta_B & 0 \\
			0 & I_{\Gamma \otimes \mathcal{D}_{L^*}}
		\end{bmatrix}, ~ \text{and} ~ 
		\Theta_1 = (I_\Gamma \otimes J_L)
		\begin{bmatrix}
			\Theta_A & 0 \\
			0 & I_{\Gamma \otimes \mathcal{D}_L}
		\end{bmatrix}.
		\]
	\end{remark}
	\begin{theorem}
		Let $\mathcal H_1$ and $\mathcal H_2$ be Hilbert spaces, and
		\[T =\begin{bmatrix}
			A& D_{A^*}L D_B\\
			0 & B
		\end{bmatrix}:(\bigoplus_{1}^n \mathcal H_1) \oplus(\bigoplus_{1}^n\mathcal H_2)\to \mathcal H_1 \oplus \mathcal H_2,\]
		be a row contraction on $\mathcal H_1 \oplus \mathcal H_2$, where $A = (A_1,
		\cdots, A_n)$ on $\mathcal H_1$ and $B = (B_1, \cdots, B_n)$ on $\mathcal H_2$
		are row contractions, and $L\in B(\mathcal D_B, \mathcal D_{A^*})$ is a
		contraction.
		Then,  in the following cases, the factorization in equation  \eqref{fact_II} of the characteristic function \(\Theta_T\) is a $3$-regular factorization:
		
		\begin{enumerate}[\rm (i)]
			\item  If the row contraction \(B\) is  pure.
			\item If the row  contraction \(T\) is  pure .
			\item If the coupling operator \(L\)  is a proper contraction, i.e.,  \(\|L^*x\| < \|x\|\) for all nonzero \(x \in \mathcal{D}_{A^*}\) or \(\|Ly\| < \|y\|\) for all nonzero \(y \in  \mathcal{D}_{B} \).
		\end{enumerate}
		
		\begin{proof}
			According to the proposition established in \cite{NF74}, the factorization 
			$$ \Theta = \Theta_2 \Theta_1, $$
			where
			\[
			\Theta_2 = 
			\begin{bmatrix}
				\Theta_B & 0 \\
				0 & I_{\Gamma \otimes \mathcal{D}_{L^*}}
			\end{bmatrix}, \quad \text{and} \quad 
			\Theta_1 = (I_\Gamma \otimes J_L)
			\begin{bmatrix}
				\Theta_A & 0 \\
				0 & I_{\Gamma \otimes \mathcal{D}_L}
			\end{bmatrix},
			\]
			is a $2$-regular factorization if and only if 
			\[\Delta_2\{\Gamma \otimes (\mathcal{D}_B\oplus\mathcal{D}_{L^*})\}\cap\Delta_{*1}\{\Gamma \otimes (\mathcal{D}_B\oplus\mathcal{D}_{L^*})\}=\{0\}.\]
			A direct calculation yields: 	 
			\begin{align}
				\Delta_2\{\Gamma &\otimes (\mathcal{D}_B\oplus\mathcal{D}_{L^*})\}\cap\Delta_{*1}\{\Gamma \otimes (\mathcal{D}_B\oplus\mathcal{D}_{L^*})\}\nonumber\\
				=&\bigg\{\begin{bmatrix}
					\Delta_B & 0\\
					0  & 0
				\end{bmatrix}
				\begin{bmatrix}
					\Gamma\otimes \mathcal{D}_B\\ \Gamma\otimes\mathcal{D}_{L^*}
				\end{bmatrix}\bigg\}\bigcap  (I_{\Gamma}\otimes J_L) \bigg\{\begin{bmatrix}
					\Delta_{*A} & 0\\
					0  & 0
				\end{bmatrix}
				\begin{bmatrix}
					\Gamma\otimes\mathcal{D}_{A^*}\\ \Gamma\otimes \mathcal{D}_{L}
				\end{bmatrix}\bigg\} \nonumber \\
				=&\big\{\Delta_B(\Gamma\otimes\mathcal{D}_B)\oplus\{0\}\big\}\cap (I_\Gamma\otimes J_L)\{
				\Delta_{*A}(\Gamma\otimes\mathcal{D}_{A*})\oplus \{0\}
				\}.\label{eq5.3}
			\end{align}
			From the Remark \ref{remark_2}, the factorization \eqref{fact_II} is $3$-regular if and only if 
			\[\big\{\Delta_B(\Gamma\otimes\mathcal{D}_B)\oplus\{0\}\big\}\cap (I_\Gamma\otimes J_L)\{
			\Delta_{*A}(\Gamma\otimes\mathcal{D}_{A*})\oplus \{0\}
			\}=\{0\}.\]
		(i) If B is a pure row contraction, then $ \Theta_B $ is inner, it implies that $ \Delta_B=(I-\Theta_B^*\Theta_B)^{1/2}=0 $. Thus
				\[\big\{\Delta_B(\Gamma\otimes\mathcal{D}_B)\oplus\{0\}\big\}\cap (I_\Gamma\otimes J_L)\{
				\Delta_{*A}(\Gamma\otimes\mathcal{D}_{A*})\oplus \{0\}
				\}=\{0\}.\]
				Hence, the factorization \eqref{fact_II} is a $3$-regular factorization.
				
				\noindent(ii) Assume that $T$ is a pure row contraction, i.e., for all $h \in \mathcal{H}_1 \oplus \mathcal{H}_2$, we have
				\[
				\lim_{k \to \infty} \sum_{|\alpha| = k} \|T^*_{\alpha} h\|^2 = 0.
				\]
				For any $h_2 \in \mathcal{H}_2$, set $h = 0 \oplus h_2$. Then, we obtain the following:
				\begin{align*}
					\lim_{k \to \infty} \sum_{|\alpha| = k} \|T^*_{\alpha} (0 \oplus h_2)\|^2 &= 0, \\
					\lim_{k \to \infty} \sum_{|\alpha| = k} \|B^*_{\alpha} h_2\|^2 &= 0.
				\end{align*}
				Thus, the operator $B$ is a pure row contraction, and hence, by case (i), the factorization \eqref{fact_II} is a $3$-regular factorization.
				
			\noindent	(iii) From the equation \eqref{eq5.3}, we get the following:
				\begin{align*}
					\Delta_2\{\Gamma&\otimes (\mathcal{D}_B\oplus\mathcal{D}_{L^*})\}\cap\Delta_{*1}\{\Gamma \otimes (\mathcal{D}_B\oplus\mathcal{D}_{L^*})\}\\
					=&\big\{\Delta_B(\Gamma\otimes\mathcal{D}_B)\oplus\{0\}\big\}\cap\{ (I_\Gamma\otimes J_L)
					(\Delta_{*A}(\Gamma\otimes\mathcal{D}_{A*})\oplus \{0\})\}\\
					=&\big\{\Delta_B(\Gamma\otimes\mathcal{D}_B)\oplus\{0\}\big\}\cap \begin{bmatrix} 
						I_\Gamma\otimes L^* & I_\Gamma\otimes D_{L} \\
						I_\Gamma\otimes D_{L^*} & - I_\Gamma\otimes L
					\end{bmatrix}
					\begin{bmatrix}
						\Delta_{*A}(\Gamma\otimes\mathcal{D}_{A*})\\ \{0\}
					\end{bmatrix}	\\
					=&\big\{\Delta_B(\Gamma \otimes \mathcal{D}_B) \oplus \{0\} \big\}\cap \big\{(I_\Gamma \otimes L^*)x\oplus (I_{\Gamma}  \otimes D_{L^*})x~:x\in\Delta_{*A}(\Gamma \otimes \mathcal{D}_{A^*})\big\}\\
					=&\left[\Delta_B(\Gamma \otimes \mathcal{D}_B)\cap (I_\Gamma \otimes L^*)\{ \Delta_{*A}(\Gamma \otimes \mathcal{D}_{A^*})\cap (\Gamma\otimes \ker D_{L^*})\}\right]\oplus \{0\}.
				\end{align*}
				It follows that the factorization \eqref{fact_II} is $3$-regular if and only if we have the equality:
				\[\Delta_B(\Gamma \otimes \mathcal{D}_B)\cap (I_\Gamma \otimes L^*)\{ \Delta_{*A}(\Gamma \otimes \mathcal{D}_{A^*})\cap (\Gamma\otimes \ker D_{L^*})\}=\{0\}.\]
				Since \( L \) is a proper contraction, then \( \ker D_{L^*} = \{0\} \), which implies that
				\[\Delta_B(\Gamma \otimes \mathcal{D}_B)\cap (I_\Gamma \otimes L^*)\{ \Delta_{*A}(\Gamma \otimes \mathcal{D}_{A^*})\cap (\Gamma\otimes \ker D_{L^*})\}=\{0\}.\] 
				Therefore, the factorization \eqref{fact_II} is a $3$-regular factorization. \qedhere 
		\end{proof}
	\end{theorem}

	\begin{remark}
		In both the single-variable and multivariable settings, if the factorizations of the characteristic function in \eqref{fact_I} and \eqref{fact_II} are $3$-regular, then the corresponding chain of invariant (or joint invariant) subspaces $\mathcal{M}_1 \subseteq \mathcal{M}_2$ must satisfy $\mathcal{M}_1=\mathcal{M}_2$. Indeed, this is a direct consequence of the second factor in a $3$-regular factorization being unitary, together with Corollary~3.7 of \cite{HM26a} and Corollary~2.9 of \cite{HM26b}.
	\end{remark}

	\noindent\textbf{Acknowledgment.}
	The research of the first author is supported in part by the Indian Institute of Technology Goa (SEED Grant 2022/SG/KH/047) and the Anusandhan National Research Foundation (MATRICS Grant MTR/2022/000339). The second author is supported by a CSIR-SRF fellowship (File No. 09/1290(12920)/2021-EMR-I) from the Council of Scientific and Industrial Research (CSIR), India.
	% Bibliography - MODIFY THIS SECTION
	
	\bibliographystyle{abbrv}
	\bibliography{myrefs}

\end{document}